\documentclass[11pt]{amsart}%
\usepackage{color}
\usepackage{amsmath}
\usepackage{amssymb}
\usepackage{amsfonts}
\usepackage[latin1]{inputenc}
\usepackage{graphicx}%
\setcounter{MaxMatrixCols}{30}
\providecommand{\U}[1]{\protect\rule{.1in}{.1in}}
\providecommand{\U}[1]{\protect\rule{.1in}{.1in}}

\DeclareMathSymbol{\subsetneqq}{\mathbin}{AMSb}{36}

\vsize 29.7cm \hsize 19cm \topmargin 0.23cm \textheight 21cm
\textwidth 16.0cm \oddsidemargin 0.0cm \evensidemargin 0.5cm
\textwidth 16.0cm \oddsidemargin 0.0cm \evensidemargin 0.5cm
\theoremstyle{plain}
\numberwithin{equation}{section}
\newtheorem{theorem}{Theorem}[section]

\newtheorem{lemma}{Lemma}[section]
\newtheorem{proposition}{Proposition}[section]

\newtheorem{remark}{Remark}[section]

\begin{document}
\title[Asymptotic for a semilinear hyperbolic equation with integrable source]{Asymptotic for a semilinear hyperbolic equation with asymptotically vanishing
damping term, convex potential, and integrable source}
\author{Mounir Balti}%
\address{Institut Pr\'eparatoire  aux Etude Scientifiques et Techniques, Universit\'e de Carthage, Bp 51 La Marsa, Tunisia} 
\email{mounir.balti@gmail.com}
\author{Ramzi May}%
\address{Mathematics Department, College of Science, King Faisal University, P.O. 380, Ahsaa 31982, Kingdom of Saudi Arabia}
\email{rmay@kfu.edu.sa}
\subjclass{34G20, 35B40, 35L71, 34D05}.
\keywords{Dissipative hyperbolic equation, asymptotically
small dissipation, asymptotic behavior, energy function, convex
function.}
\vskip 0.2cm
\date{Juin 28, 2022}
\maketitle

\begin{abstract} We investigate the long time behavior of solutions
to semilinear hyperbolic equations on the form:%
\begin{equation}
u^{\prime\prime}(t)+\gamma(t)u^{\prime}(t)+Au(t)+f(u(t))=g(t),~t\geq0,
\tag{E$_\alpha$}%
\end{equation}
where $A$ is a self-adjoint nonnegative operator, $f$ a function which is the gradient a regular convex function, and $\gamma$ a nonnegative function which behaviors,
for $t$ large enough, as $\frac{K}{t^{\alpha}}$ with $K>0$ and $\alpha
\in\lbrack0,1[.$ We obtain sufficient conditions on the source term $g(t),$ that ensure the weak or the strong convergence of any solution $u(t)$ of
(E$_{\alpha}$) as $t\rightarrow+\infty$ to a solution of the stationary
equation $Av+f(v)=0$ if one exists.
\end{abstract}
\section{Introduction and statement of the main results}

Let $H$ be a real Hilbert space with inner product and norm respectively
denoted by $\langle.,.\rangle$ and $\left\vert .\right\vert .$ $V$ is an other
Hilbert space continuously and densely embedded in $H.$ Let $V^{\prime}$ be the
dual space of $V.$ By identifying $H$ with its dual space, we have
$V\hookrightarrow H\hookrightarrow V^{\prime}.$ We recall the following
important relation that will be used repeatedly in the sequel:%
\begin{equation}
\langle v,w\rangle_{V^{\prime},V}=\langle v,w\rangle~\forall(v,w)\in H\times
V. \label{Kh}%
\end{equation}
In this paper, we investigate the long time behavior as $t\rightarrow+\infty$
of solutions $u(t)$ to the following second order semi-linear hyperbolic
equation:%
\begin{equation}
u^{\prime\prime}(t)+\gamma(t)u^{\prime}(t)+Au(t)+f(u(t))=g(t),~t\geq0,
\tag{E$_\alpha$}%
\end{equation}
where the damping term $\gamma$ is a function in $W_{loc}^{1,1}(\mathbb{R}%
^{+},\mathbb{R}^{+})$ which behaviors like $\frac{K}{t^{\alpha}}$ for some
$K>0,~\alpha\in\lbrack0,1[,$ and $t$ large enough. Precisely, we assume that
there exist $K>0,~t_{0}\geq0$ and $\alpha\in\lbrack0,1[$ such that:%
\begin{equation}
\gamma(t)\geq\frac{K}{(1+t)^{\alpha}}~\forall t\geq t_{0}, \label{h1}%
\end{equation}%
\begin{equation}
((1+t)^{\alpha}\gamma(t))^{\prime}\leq0~\text{a.e. }t\geq t_{0}. \label{h2}%
\end{equation}
The operator $A:V\rightarrow V^{\prime}$ is linear and continuous. We suppose
that the associated bilinear form $a:V\times V\rightarrow\mathbb{R}$ defined
by
\[
a(v,w)=\langle Av,w\rangle_{V^{\prime},V}%
\]
is symmetric, positive and satisfies the semi-coercivity property:%
\[
\exists\lambda\geq0,\mu>0:~a(v,v)+\lambda\left\vert v\right\vert ^{2}\geq
\mu\left\Vert v\right\Vert _{V}^{2}~\forall v\in V.
\]
We assume that the function $f:V\rightarrow V^{\prime}$ is a continuous
function that derives from a $C^{1}$ convex function $F:V\rightarrow
\mathbb{R}$ in the sense:
\begin{equation}
\forall u,v\in V,F^{\prime}(u)(v)=\langle f(u),v\rangle_{V^{\prime},V},
\label{ppr}%
\end{equation}
which is equivalent to 
\begin{equation}
\forall u\in V,\nabla F(u)= f(u).
\end{equation}

Let us consider the function $\Phi:V\rightarrow\mathbb{R}$ defined by:%
\[
\Phi(v)=\frac{1}{2}a(v,v)+F(v).
\]
It is clear that $\Phi$ is a $C^{1}$ convex function and for all $v\in V,$
$\nabla\Phi(v)=Av+f(v).$ We assume that the set
\[
\arg\min\Phi=\{v\in V:\Phi(v)=\min_{V}\Phi:=\Phi^{\ast}\},
\]
which coincides with the set $\{v\in V:Av+f(v)=0\},$ is nonempty.
\par\noindent Last, we suppose that function $g:\mathbb{R}^{+}\rightarrow H$ belongs to the space
$L^{1}(\mathbb{R}^{+},H).$

In this paper, we assume the existence of a global solution $u$ to Eq.
(E$_{\alpha}$) in the class%
\begin{equation}
W_{loc}^{2,1}(\mathbb{R}^{+},H)\cap W_{loc}^{1,1}(\mathbb{R}^{+},V), \label{J}%
\end{equation}
and we focus our attention on the study of the asymptotic behavior of $u(t)$ as $t$ goes to infinity.
Before setting our main theorems, let us first recall some previous results related to this subject.
In the pioneer paper \cite{Al}, Alvarez considered the case where $V=H,$ the
damping term $\gamma$ is a non negative constant and the source $g$ is equal
to $0.$ He proved that $u(t)$ converges weakly to a minimizer
of the function $\Phi.$ Moreover, he showed that the convergence is strong if
the function $\Phi$ is even or the interior of $\arg\min\Phi$ is not empty. In
\cite{HJ}, Haraux and Jendoubi extended the weak convergence result of Alvarez
to the case where the source term is in the space $L^{1}(\mathbb{R}^{+},H).$ Cabot and Frankel \cite{CF} studied Eq. (E$_{\alpha}$) where $g=0$
and $\gamma(t)$ behaviors at infinity like $\frac{K}{t^{\alpha}}$ with $K>0$
and $\alpha\in]0,1[.$ They proved that every \underline{bounded} solution
converges weakly toward a critical point of $\Phi.$ In the paper
\cite{M}, the second author of the present paper improved the result of Cabot
and Frankel by getting rid of the supplementary hypothesis on the boundedness
of the solution. In \cite{JM}, it was proved that the main convergence result
of Cabot and Frankel remains true if the source term $g$ satisfies the
condition $\int_{0}^{+\infty}(1+t)\left\vert g(t)\right\vert dt<\infty$. The
first purpose of the present paper is to improve this last result. In fact, we
prove that the convergence holds under the weaker and optimal condition
\begin{equation}
\int_{0}^{+\infty}(1+t)^{\alpha}\left\vert g(t)\right\vert dt<\infty.
\label{op}%
\end{equation}
Precisely, we establish the following result.

\begin{theorem}
\label{th1}Assume that $\int_{0}^{+\infty}(1+t)^{\alpha}\left\vert
g(t)\right\vert dt<\infty.$ Let $u$ be a solution to Eq. (E$_{\alpha}$) in the
class (\ref{J}). If $u\in L^{\infty}(\mathbb{R}^{+},H),$ then $u(t)$ converges
weakly in $V$ as $t\rightarrow+\infty$ toward some element of $\arg\min\Phi.$
Moreover, the energy function
\begin{equation}
\mathcal{E}(t):=\frac{1}{2}\left\vert u^{\prime}(t)\right\vert ^{2}%
+\Phi(u(t))-\Phi^{\ast} \label{Mn}%
\end{equation}
satisfies $\mathcal{E}(t)=\circ(t^{-2\alpha})$ as $t\rightarrow+\infty.$
\end{theorem}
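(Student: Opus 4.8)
The plan is to run a Lyapunov/energy argument anchored at a minimizer and then close with Opial's lemma. First I would differentiate the energy (\ref{Mn}). Using Eq. (E$_{\alpha}$), the identification (\ref{Kh}), and the chain rule for the $C^{1}$ convex function $\Phi$, one obtains the identity
\[
\mathcal{E}^{\prime}(t)=-\gamma(t)\left\vert u^{\prime}(t)\right\vert^{2}+\langle g(t),u^{\prime}(t)\rangle .
\]
Since $\mathcal{E}\geq0$ and $\tfrac{1}{2}\left\vert u^{\prime}\right\vert^{2}\leq\mathcal{E}$, this gives $\tfrac{d}{dt}\sqrt{\mathcal{E}}\leq\tfrac{1}{\sqrt{2}}\left\vert g\right\vert$, so $g\in L^{1}$ forces $\mathcal{E}$ to be bounded; hence $u^{\prime}$ is bounded in $H$ and $\Phi(u)$ is bounded, whence, by the semi-coercivity of $a$ together with the affine minorization of $F$, the trajectory $u$ is bounded in $V$. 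Integrating the identity also yields $\int_{0}^{\infty}\gamma\left\vert u^{\prime}\right\vert^{2}\,dt<\infty$.

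Next I fix $v\in\arg\min\Phi$, set $w=u-v$, and introduce the weighted Lyapunov function $\mathcal{L}(t)=(1+t)^{2\alpha}\mathcal{E}(t)+\xi(t)\langle u^{\prime},w\rangle$ with $\xi(t)=c\,(1+t)^{2\alpha-1}$ and $c>2\alpha$ to be fixed. Differentiating and inserting Eq. (E$_{\alpha}$), the convexity inequality $\langle Au+f(u),w\rangle_{V^{\prime},V}\geq\Phi(u)-\Phi^{\ast}$ turns the coupling into the strictly negative contribution $-(c-2\alpha)(1+t)^{2\alpha-1}(\Phi(u)-\Phi^{\ast})$, which absorbs the bad term $2\alpha(1+t)^{2\alpha-1}(\Phi(u)-\Phi^{\ast})$ produced by the weight. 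The kinetic cross terms $\xi^{\prime}\langle u^{\prime},w\rangle$ and $-\xi\gamma\langle u^{\prime},w\rangle$ are handled by Young's inequality; here (\ref{h1})--(\ref{h2}), the bound $\gamma(t)\leq C(1+t)^{-\alpha}$ they entail, the boundedness of $w$ in $H$, and $\alpha<1$ are exactly what render the remainders integrable and keep the dissipation $(1+t)^{2\alpha}\gamma\left\vert u^{\prime}\right\vert^{2}$ dominant. The decisive point is the source: from $\left\vert u^{\prime}\right\vert\leq\sqrt{2\mathcal{E}}$ and the a priori bound $\tfrac{1}{2}(1+t)^{2\alpha}\mathcal{E}\leq\mathcal{L}+C$ (which the same Young estimates provide), the term $(1+t)^{2\alpha}\langle g,u^{\prime}\rangle$ is majorized by $C(1+t)^{\alpha}\left\vert g\right\vert\sqrt{\mathcal{L}+C}$, while the term $\xi\langle g,w\rangle$ is bounded by $C(1+t)^{2\alpha-1}\left\vert g\right\vert\leq C(1+t)^{\alpha}\left\vert g\right\vert$. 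This is precisely where the optimal weight $(1+t)^{\alpha}$ of (\ref{op}) appears. One is thus led to $\mathcal{L}^{\prime}\leq C(1+t)^{\alpha}\left\vert g\right\vert\sqrt{\mathcal{L}+C}+\psi(t)$ with $\psi\in L^{1}$, and a Bihari--Gronwall lemma gives the boundedness of $\mathcal{L}$, hence $\mathcal{E}(t)=O(t^{-2\alpha})$. Integrating the full inequality once more yields $\int_{0}^{\infty}(1+t)^{2\alpha}\gamma\left\vert u^{\prime}\right\vert^{2}\,dt<\infty$ and $\int_{0}^{\infty}(1+t)^{2\alpha-1}(\Phi(u)-\Phi^{\ast})\,dt<\infty$; with (\ref{h1}) the first gives $\int_{0}^{\infty}(1+t)^{\alpha}\left\vert u^{\prime}\right\vert^{2}\,dt<\infty$.

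To upgrade $O$ to $o$, set $\Psi(t)=(1+t)^{2\alpha}\mathcal{E}(t)$; its derivative $2\alpha(1+t)^{2\alpha-1}\mathcal{E}+(1+t)^{2\alpha}\langle g,u^{\prime}\rangle-(1+t)^{2\alpha}\gamma\left\vert u^{\prime}\right\vert^{2}$ has all three summands integrable by the estimates above, so $\Psi$ has a limit $\ell$, and $\int_{0}^{\infty}(1+t)^{2\alpha-1}\mathcal{E}\,dt<\infty$ forces $\ell=0$, i.e.\ $\mathcal{E}(t)=o(t^{-2\alpha})$. For the weak convergence I would use the continuous Opial lemma in $V$. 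Since $\mathcal{E}(t)\to0$ we have $\Phi(u(t))\to\Phi^{\ast}$, so every weak cluster point of the $V$-bounded trajectory lies in $\arg\min\Phi$ by weak lower semicontinuity of the convex continuous $\Phi$. It remains to show $t\mapsto\left\vert u(t)-v\right\vert$ converges for each $v\in\arg\min\Phi$: with $h_{v}=\tfrac{1}{2}\left\vert u-v\right\vert^{2}$, Eq. (E$_{\alpha}$) and convexity give $h_{v}^{\prime\prime}+\gamma h_{v}^{\prime}\leq\left\vert u^{\prime}\right\vert^{2}+\left\vert g\right\vert\left\vert u-v\right\vert=:R$; multiplying by the integrating factor $e^{\Gamma}$, $\Gamma=\int_{t_{0}}^{t}\gamma$, and using $e^{\Gamma(s)}\int_{s}^{\infty}e^{-\Gamma}\,dt\sim\gamma(s)^{-1}\leq C(1+s)^{\alpha}$ (from (\ref{h1})--(\ref{h2})), one bounds $\int^{\infty}[h_{v}^{\prime}]_{+}\,dt$ by $C\int^{\infty}(1+t)^{\alpha}R\,dt<\infty$, finiteness coming from $\int(1+t)^{\alpha}\left\vert u^{\prime}\right\vert^{2}<\infty$ and the hypothesis on $g$. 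Thus $\lim_{t}h_{v}(t)$ exists, Opial applies, and $u(t)$ converges weakly in $V$ to an element of $\arg\min\Phi$.

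I expect the main obstacle to be the second step: designing $\mathcal{L}$ so that, after the convexity cancellation, every cross term is either absorbed by the dissipation or integrable, and so that the forcing collapses to exactly $(1+t)^{\alpha}\left\vert g\right\vert\sqrt{\mathcal{L}}$. It is this collapse, powered simultaneously by (\ref{h1})--(\ref{h2}), the boundedness of $u$, and the constraint $\alpha<1$, that makes the optimal integrability condition (\ref{op}) sufficient; balancing the constant $c$ against the growth of $(1+t)^{2\alpha}\gamma$ is the delicate part.
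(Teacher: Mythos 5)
Your proposal is correct and takes essentially the same route as the paper: your Lyapunov function $\mathcal{L}(t)=(1+t)^{2\alpha}\mathcal{E}(t)+c(1+t)^{2\alpha-1}\langle u^{\prime},u-v\rangle$ is precisely the differential form of what the paper produces by multiplying the inequality for $p=\frac{1}{2}\left\vert u-\bar{u}\right\vert ^{2}$ by $\lambda_{\nu}^{\prime}$ (with $\nu=2\alpha$) and integrating by parts in Proposition \ref{pr1}, with the same use of convexity to generate the $-(\Phi(u)-\Phi^{\ast})$ term, the same Gronwall closure against $(1+t)^{\alpha}\left\vert g\right\vert$, and the same integrability-contradiction upgrade from $O(t^{-2\alpha})$ to $\circ(t^{-2\alpha})$. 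Your final Opial step, resting on the bound $\int_{s}^{+\infty}e^{-\Gamma(t,s)}dt\leq C(1+s)^{\alpha}$, is exactly the paper's Lemma \ref{le1} and Proposition \ref{pr2}; the only differences are organizational (the paper isolates a general exponent $\nu\in\lbrack0,1+\alpha\lbrack$ and splits the argument into two propositions).
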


In the next theorem, we prove that we can get rid of the hypothesis on the
boundedness of the solution by adding a second condition on the source term.
This theorem generalizes the main result of \cite{M}.

\begin{theorem}
\label{th2}Assume that $\int_{0}^{+\infty}(1+t)^{\alpha}\left\vert
g(t)\right\vert dt<\infty$ and $\int_{0}^{+\infty}(1+t)^{3\alpha}\left\vert
g(t)\right\vert ^{2}dt<\infty.$ Let $u$ be a solution to Eq. (E$_{\alpha}$) in
the class (\ref{J}). Then $u\in L^{\infty}(\mathbb{R}^{+},H)$ and, therefore,
we have the same conclusion as in Theorem \ref{th1}.
\end{theorem}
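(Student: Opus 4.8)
The plan is to observe that, by Theorem \ref{th1}, the whole statement reduces to proving $u\in L^{\infty}(\mathbb{R}^{+},H)$; the weak convergence and the decay $\mathcal{E}(t)=\circ(t^{-2\alpha})$ then follow, the hypothesis of Theorem \ref{th1} being exactly the first integrability assumption on $g$. Differentiating the energy \eqref{Mn} and using $u''+\nabla\Phi(u)=g-\gamma u'$ together with $\nabla\Phi(u)=Au+f(u)$ gives
\[
\mathcal{E}'(t)=\langle g(t)-\gamma(t)u'(t),u'(t)\rangle=-\gamma(t)\left\vert u'(t)\right\vert^{2}+\langle g(t),u'(t)\rangle .
\]
Completing the square, $\mathcal{E}'\leq-\tfrac{1}{2}\gamma\vert u'\vert^{2}+\tfrac{1}{2\gamma}\vert g\vert^{2}$, and since \eqref{h1} gives $\gamma^{-1}\leq K^{-1}(1+t)^{\alpha}$, the term $\tfrac{1}{2\gamma}\vert g\vert^{2}\leq\tfrac{1}{2K}(1+t)^{\alpha}\vert g\vert^{2}$ is integrable (a fortiori by the second assumption). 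Hence $\mathcal{E}$ is bounded, $u'$ is bounded in $H$, $\Phi(u)$ is bounded, and $\int_{0}^{\infty}\gamma\vert u'\vert^{2}<\infty$. This is \emph{not} enough to bound $u$ itself: $\Phi$ need not be coercive, so $u$ may still escape to infinity along the flat directions of $\Phi$. Overcoming this is the main obstacle.

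To control the position I would fix $p\in\arg\min\Phi$, set $h(t)=\tfrac{1}{2}\vert u(t)-p\vert^{2}$, and introduce the weighted Lyapunov functional
\[
\mathcal{W}(t)=(1+t)^{2\alpha}\mathcal{E}(t)+c\,(1+t)^{\alpha}\langle u(t)-p,u'(t)\rangle+\zeta(t)\,h(t),
\]
with a small constant $c>0$ and $\zeta=c\bigl((1+t)^{\alpha}\gamma-\alpha(1+t)^{\alpha-1}\bigr)$ chosen so as to annihilate the coefficient of $\langle u-p,u'\rangle$ produced by differentiation. Using the expression for $h''$ coming from the equation and the convexity inequality $\langle u-p,\nabla\Phi(u)\rangle\geq\Phi(u)-\Phi^{\ast}\geq0$, one computes $\mathcal{W}'$ and checks, via \eqref{h1}--\eqref{h2} (which make $(1+t)^{\alpha}\gamma$ bounded below by $K$ and nonincreasing), that for $c<K/2$ and $t$ large the coefficients of $\vert u'\vert^{2}$ and of $\Phi(u)-\Phi^{\ast}$ are both nonpositive and may be discarded. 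The same restriction $c<K/2$ makes $\mathcal{W}$ a positive-definite quadratic form in $\bigl((1+t)^{\alpha}\vert u'\vert,\vert u-p\vert\bigr)$, so that $\mathcal{W}\gtrsim h+(1+t)^{2\alpha}\mathcal{E}$; consequently a bound on $\mathcal{W}$ will bound $h$, hence $u$.

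It then remains to absorb the two forcing terms produced by $g$. The term $(1+t)^{2\alpha}\langle g,u'\rangle$ is estimated by Young's inequality against the dissipation $(1+t)^{2\alpha}\gamma\vert u'\vert^{2}$, which leaves exactly $\tfrac{1}{2K}(1+t)^{3\alpha}\vert g\vert^{2}$ --- integrable by the second hypothesis (this is where the weight $3\alpha$ is forced). The term $c(1+t)^{\alpha}\langle u-p,g\rangle$ is bounded by $C(1+t)^{\alpha}\vert g\vert\sqrt{\mathcal{W}}$ using $h\leq\mathcal{W}/\delta$, with $(1+t)^{\alpha}\vert g\vert\in L^{1}$ by the first hypothesis; finally $\zeta'\leq c\alpha(1-\alpha)(1+t)^{\alpha-2}$ contributes a harmless factor $\rho(t)\mathcal{W}$ with $\rho\in L^{1}$. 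Thus, for $t$ large,
\[
\mathcal{W}'(t)\leq\rho(t)\,\mathcal{W}(t)+\tfrac{1}{2K}(1+t)^{3\alpha}\vert g(t)\vert^{2}+C(1+t)^{\alpha}\vert g(t)\vert\sqrt{\mathcal{W}(t)},
\]
where all coefficients lie in $L^{1}(\mathbb{R}^{+})$. Setting $Z=\sqrt{\mathcal{W}}$ turns this into a linear differential inequality for $Z$, and Gronwall's lemma yields $\mathcal{W}\in L^{\infty}$. Therefore $h$, and hence $u$, is bounded in $H$, and Theorem \ref{th1} gives the conclusion; as a by-product one even recovers $\mathcal{E}=O(t^{-2\alpha})$, sharpened to $\circ(t^{-2\alpha})$ by Theorem \ref{th1}. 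The delicate point throughout is the tuning of the three weights in $\mathcal{W}$ against the decaying damping, and the recognition that the two integrability conditions on $g$ are matched precisely to the kinetic forcing term $\langle g,u'\rangle$ and the positional forcing term $\langle u-p,g\rangle$ respectively.
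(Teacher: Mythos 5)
Your proposal is correct, but it reaches the conclusion by a genuinely different route than the paper. The paper's own proof of Theorem \ref{th2} is a two-line reduction: Proposition \ref{pr1} with $\nu=2\alpha$ is invoked under its \emph{alternative} hypothesis $\int_{0}^{+\infty}(1+t)^{3\alpha}\left\vert g\right\vert ^{2}dt<\infty$ (which replaces boundedness of $u$), giving $\mathcal{E}(t)=\circ(t^{-2\alpha})$ and $\int_{0}^{+\infty}(1+t)^{\alpha}\left\vert u^{\prime}\right\vert ^{2}dt<\infty$ \emph{before} anything is known about $\left\vert u(t)\right\vert$; boundedness of $u$ only appears afterwards, inside the proof of Proposition \ref{pr2}, via the integrating factor $e^{-\Gamma(t,s)}$, Lemma \ref{le1}, Fubini and Gronwall applied to $p^{\prime\prime}+\gamma p^{\prime}\leq\left\vert u^{\prime}\right\vert ^{2}+\left\vert g\right\vert \sqrt{2p}$. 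You do the opposite: you prove $u\in L^{\infty}(\mathbb{R}^{+},H)$ first and directly, with a single Alvarez/Attouch--Chbani-style Lyapunov functional $\mathcal{W}$ containing the velocity--position cross term, and then quote Theorem \ref{th1}; this is legitimate since the statement itself reduces to the boundedness claim. Your key computations check out: the choice of $\zeta$ does annihilate the $\langle u-p,u^{\prime}\rangle$ terms, the condition $c<K/2$ gives both positive definiteness (so $\mathcal{W}\geq\varepsilon\bigl(h+(1+t)^{2\alpha}\mathcal{E}\bigr)$ for $t$ large) and the sign conditions on the coefficients of $\left\vert u^{\prime}\right\vert ^{2}$ and $\Phi(u)-\Phi^{\ast}$ (both using $2\alpha-1<\alpha$), and the final Gronwall step is sound after the standard replacement $\sqrt{\mathcal{W}}\leq1+\mathcal{W}$ (cleaner than differentiating $\sqrt{\mathcal{W}}$, which can vanish). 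It is worth noting that the paper's integration-by-parts argument implicitly manipulates the same three ingredients (weighted energy, cross term $\lambda_{\nu}^{\prime}p^{\prime}$, weighted distance $\lambda_{\nu}^{\prime}\gamma p-\lambda_{\nu}^{\prime\prime}p$), but with different weights, and the two hypotheses on $g$ enter with \emph{swapped} roles: in the paper the kinetic term $\lambda_{\frac{\nu}{2}}\left\vert g\right\vert \sqrt{\lambda_{\nu}\mathcal{E}}$ is handled by the $L^{1}$ condition via Gronwall, while the positional term $\lambda_{\nu}^{\prime}\left\vert g\right\vert \sqrt{2p}$ is absorbed against the decay term in $p$ using the $3\alpha$-weighted $L^{2}$ condition; in your proof the kinetic term $(1+t)^{2\alpha}\langle g,u^{\prime}\rangle$ consumes the $L^{2}$ condition (Young against the dissipation) and the positional term $c(1+t)^{\alpha}\langle u-p,g\rangle$ consumes the $L^{1}$ condition. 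Your approach buys a self-contained, one-functional proof of boundedness, close in spirit to \cite{AC}; the paper's organization buys the more general intermediate estimates of Proposition \ref{pr1} (stated for all $\nu<1+\alpha$), which are reused in the proofs of Theorems \ref{th3} and \ref{th4}.
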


Our two last main results concern the strong convergence of the solution when
the potential function $\Phi$ is even or the interior of the set $\arg\min
\Phi$ is nonempty.

\begin{theorem}
\label{th3}Assume that the function $\Phi$ is even, $\int_{0}^{+\infty
}(1+t)^{\alpha}\left\vert g(t)\right\vert dt<\infty,$ and $\int_{0}^{+\infty
}(1+t)^{2\alpha+1}\left\vert g(t)\right\vert ^{2}dt<\infty.$ Let $u$ be a
solution to Eq. (E$_{\alpha}$) in the class (\ref{J}). Then there exists
$u_{\infty}\in\arg\min\Phi$ such that $u(t)\rightarrow u_{\infty}$ strongly in
$V$ as $t\rightarrow+\infty.$
\end{theorem}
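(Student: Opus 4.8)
The plan is to show that the hypotheses of Theorem \ref{th3} already contain those of Theorem \ref{th2}, so that weak convergence and the energy decay come for free, and then to upgrade weak to strong convergence by exploiting the symmetry of $\Phi$. First I would observe that since $\alpha\in[0,1[$ one has $2\alpha+1\geq 3\alpha$, hence $(1+t)^{3\alpha}\leq(1+t)^{2\alpha+1}$ and
\[
\int_{0}^{+\infty}(1+t)^{3\alpha}\left\vert g(t)\right\vert ^{2}\,dt\leq\int_{0}^{+\infty}(1+t)^{2\alpha+1}\left\vert g(t)\right\vert ^{2}\,dt<\infty .
\]
Together with $\int_{0}^{+\infty}(1+t)^{\alpha}\left\vert g(t)\right\vert \,dt<\infty$, this places us under the assumptions of Theorem \ref{th2}. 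Consequently $u\in L^{\infty}(\mathbb{R}^{+},H)$, there is $u_{\infty}\in\arg\min\Phi$ with $u(t)\rightharpoonup u_{\infty}$ weakly in $V$, and $\mathcal{E}(t)=\circ(t^{-2\alpha})$; in particular $\frac12\left\vert u^{\prime}(t)\right\vert ^{2}\to0$ and, since $\Phi(u(t))-\Phi^{\ast}\leq\mathcal{E}(t)$ while $\Phi(u(t))\geq\Phi^{\ast}$, also $\Phi(u(t))\to\Phi^{\ast}$.

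Next I would reduce strong convergence in $V$ to strong convergence in $H$. Because $v\mapsto a(v,v)$ and $F$ are convex and continuous on $V$, both are weakly lower semicontinuous; combining $\liminf a(u(t),u(t))\geq a(u_{\infty},u_{\infty})$ and $\liminf F(u(t))\geq F(u_{\infty})$ with $\Phi(u(t))\to\Phi(u_{\infty})$ forces $a(u(t),u(t))\to a(u_{\infty},u_{\infty})$, since any subsequential excess in the quadratic term would have to be cancelled by a deficit in $F$, contradicting its lower semicontinuity. As $u(t)\rightharpoonup u_{\infty}$ in $V$ gives $a(u(t),u_{\infty})\to a(u_{\infty},u_{\infty})$, expanding the square yields $a(u(t)-u_{\infty},u(t)-u_{\infty})\to0$. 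The semi-coercivity inequality $\mu\left\Vert u(t)-u_{\infty}\right\Vert _{V}^{2}\leq a(u(t)-u_{\infty},u(t)-u_{\infty})+\lambda\left\vert u(t)-u_{\infty}\right\vert ^{2}$ then shows that $u(t)\to u_{\infty}$ strongly in $V$ as soon as $u(t)\to u_{\infty}$ strongly in $H$. Everything is thus reduced to the $H$-convergence, and this is where evenness must enter.

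The symmetry is used as follows. Since $\Phi$ is even, $\nabla\Phi$ is odd, so $\arg\min\Phi$ is a symmetric closed convex set; denoting by $q$ the projection of $0$ onto it, $-q$ is the projection of $0$ onto $-\arg\min\Phi=\arg\min\Phi$, whence $q=-q$ and $0\in\arg\min\Phi$. Equivalently, $-u(\cdot)$ solves (E$_{\alpha}$) with source $-g$. Taking the minimizer $p=0$ in the Opial-type estimate that underlies Theorem \ref{th1}, the limit $\ell:=\lim_{t\to+\infty}\left\vert u(t)\right\vert$ exists, and weak lower semicontinuity of the norm gives $\left\vert u_{\infty}\right\vert \leq\ell$. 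Since in a Hilbert space weak convergence together with convergence of norms implies strong convergence, the proof will be complete once I establish the reverse inequality $\ell\leq\left\vert u_{\infty}\right\vert $.

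Proving $\ell\leq\left\vert u_{\infty}\right\vert $ is the main obstacle, because weak convergence and the mere existence of $\ell$ cannot by themselves exclude a persistent transverse oscillation of squared size $\ell^{2}-\left\vert u_{\infty}\right\vert ^{2}$. The difficulty is concentrated in the kernel of $A$: on a complement of $\ker A$ the form $a$ is coercive, so the second paragraph already yields convergence there, whereas on $\ker A$ the potential supplies no quadratic restoring term and the reduced dynamics is a second-order equation damped only by the vanishing $\gamma$. To kill this transverse part I would anchor the virial identity at the equilibrium $0$ (available precisely because $\Phi$ is even, so that $\langle\nabla\Phi(u),u\rangle\geq\Phi(u)-\Phi^{\ast}\geq0$), multiply it by the weight $(1+t)^{2\alpha+1}$, and integrate: hypotheses (\ref{h1})--(\ref{h2}) control the damping terms, the bound $\int_{0}^{+\infty}(1+t)^{2\alpha+1}\left\vert g(t)\right\vert ^{2}\,dt<\infty$ is exactly what renders the source contributions summable, and the oddness of $\nabla\Phi$ gives the remaining terms a favourable sign. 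The resulting integrability should force the transverse energy to vanish, yielding $\ell=\left\vert u_{\infty}\right\vert $ and, together with the second paragraph, strong convergence in $V$.
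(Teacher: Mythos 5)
Your first three paragraphs are sound and, in fact, track the paper: the reduction via $2\alpha+1\geq 3\alpha$ to Theorem \ref{th2}, the observation that $0\in\arg\min\Phi$ by convexity and evenness so that $\ell=\lim_{t\to+\infty}|u(t)|$ exists, and your semi-coercivity argument in the second paragraph is a legitimate self-contained substitute for the paper's citation of \cite[Corollary 3.6]{CF} to pass from strong $H$-convergence to strong $V$-convergence. The genuine gap is exactly where you locate it: the inequality $\ell\leq|u_\infty|$ (equivalently, strong convergence in $H$) is the entire content of the theorem, and your fourth paragraph does not prove it. Two concrete problems. First, the weighted integrability your virial sketch would need is not available: Proposition \ref{pr1} only yields $\int_0^{+\infty}(1+t)^{\alpha}|u'(t)|^2dt<\infty$ (its constraint $\nu<1+\alpha$ rules out anything like $\int_0^{+\infty}(1+t)^{2\alpha+1}|u'(t)|^2dt<\infty$), so after multiplying the identity for $\frac{d^2}{dt^2}\frac12|u|^2$ by $(1+t)^{2\alpha+1}$ the kinetic terms are out of control; in the paper the exponent $2\alpha+1$ enters quite differently, through the Fubini estimate $\int_{\tau_0}^{+\infty}(1+t)^{\alpha}\int_t^{+\infty}(1+s)^{\alpha}|g(s)|^2\,ds\,dt\leq\frac{1}{\alpha+1}\int_{\tau_0}^{+\infty}(1+s)^{2\alpha+1}|g(s)|^2ds$. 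Second, and more fundamentally, a virial anchored at the single point $0$ exploits evenness only through ``$0\in\arg\min\Phi$'' and ``$\langle\nabla\Phi(u),u\rangle\geq\Phi(u)-\Phi^{\ast}\geq0$''; both already hold for an arbitrary convex potential once a minimizer is translated to the origin (oddness of $\nabla\Phi$ adds no sign information to $\langle\nabla\Phi(u),u\rangle$). Since strong convergence is precisely the property that fails for general convex potentials (Baillon-type examples are the reason evenness is assumed at all), no argument built only from such translation-invariant facts can close the gap.

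What is missing is Bruck's two-time device, which is how the paper actually uses evenness. Fix $\tau\geq\tau_0$ and set $q(t)=|u(t)|^2-|u(\tau)|^2-\frac12|u(t)-u(\tau)|^2$, so that $q'(t)=\langle u'(t),u(t)+u(\tau)\rangle$. Then $q''+\gamma q'$ produces the term $\langle\nabla\Phi(u(t)),-u(t)-u(\tau)\rangle_{V^{\prime},V}\leq\Phi(-u(\tau))-\Phi(u(t))$ by convexity, and evenness is consumed exactly here, replacing $\Phi(-u(\tau))$ by $\Phi(u(\tau))$. The resulting difference $\Phi(u(\tau))-\Phi(u(t))$ with $\tau\geq t$ is then dominated using the \emph{nonincreasing} modified energy $\tilde{\mathcal{E}}(t)=\mathcal{E}(t)+\int_t^{+\infty}\frac{|g(s)|^2}{4\gamma(s)}ds$, whose tail term is where the hypothesis $\int_0^{+\infty}(1+t)^{2\alpha+1}|g(t)|^2dt<\infty$ is used (via Lemma \ref{le1} and the Fubini estimate above). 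This gives $q''+\gamma q'\leq\omega$ with $\int(1+t)^{\alpha}\omega<\infty$, hence an integrable majorant $\kappa$ of $q'$, and integrating from $t$ to $\tau$ yields the Cauchy estimate $\frac12|u(t)-u(\tau)|^2\leq|u(t)|^2-|u(\tau)|^2+\int_t^{\tau}\kappa(s)ds$, whose right-hand side tends to $0$ as $t,\tau\to+\infty$ because $\lim_{t\to+\infty}|u(t)|^2$ exists. It is this coupling of two different times $t$ and $\tau$ through the convexity inequality that rules out the persistent transverse oscillation you correctly identify as the obstruction; your single-time virial cannot see it.
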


\begin{theorem}
\label{th4}Assume that the interior of the set $\arg\min\Phi$ with respect of
the strong topology of $V$ is not empty. Let $u$ be a solution to Eq.
(E$_{\alpha}$) in the class (\ref{J}). If $\int_{0}^{+\infty}(1+t)^{\alpha
}\left\vert g(t)\right\vert dt<\infty$ and $u\in L^{\infty}(\mathbb{R}%
^{+},H),$ then $u(t)$ converges strongly in $V$ as $t\rightarrow+\infty$ to
some element of $\arg\min\Phi.$
\end{theorem}
\begin{remark}
In \cite{ACPR}, the authors have considered the second
order differential equation
\begin{equation}
u^{\prime\prime}(t)+\frac{K}{t}u^{\prime}(t)+\nabla\Phi(u(t))=g(t),
\tag{E$_1$}%
\end{equation}
where $K>0,$ $\Phi:H\rightarrow\mathbb{R}$ is a smooth convex function, and
$g\in L^{1}(\mathbb{R}^{+},H).$ In Eq. (E$_{1}$) the damping term
$\gamma(t)=\frac{K}{t}$ satisfies (\ref{h1}) and (\ref{h2}) with $\alpha=1$.
Hence (E$_{1}$) can be considered as a limit case of Eq. (E$_{\alpha}$). The authors supposed that the source term $g$ satisfies only the
optimal condition $\int_{0}^{+\infty}(1+t)\left\vert g(t)\right\vert
dt<\infty$ corresponding to (\ref{op}) with $\alpha=1,$ and they proved that
if $K>3$ and $\arg\min\Phi\neq\varnothing$ then every solution to (E$_{1}$)
converges weakly to a minimizer of the function $\Phi$ and strongly if in
addition $\Phi$ is even or the interior of $\arg\min\Phi$ is nonempty. It
would be interesting to know whether our main results remain true if we
suppose that the function $g$ satisfies only the optimal condition (\ref{op})
and without assuming that the solution $u$ belongs to the space $L^{\infty
}(\mathbb{R}^{+},H).$
\end{remark}

\begin{remark}
A typical example of Eq. (E$_{\alpha})$ is the following nonlinear damped wave
equation:%
\[
u_{tt}+\gamma(t)u_{t}-\Delta u+f(u)=g\text{ on }\Omega\times]0,+\infty
\lbrack,
\]
with the Dirchlet boundary condition:%
\[
u=0\text{ on }\partial\Omega\times]0,+\infty\lbrack,
\]
where $\Omega$ is a bounded open subset of $\mathbb{R}^{N},~g\in
L^{1}([0,+\infty\lbrack,L^{2}(\Omega)),$ and $f:\mathbb{R}\rightarrow
\mathbb{R}$ is a continuous and nondecreasing function which satisfies
\[
\left\vert f(s)\right\vert \leq C(1+\left\vert s\right\vert )^{m}~\forall
s\in\mathbb{R},
\]
where $C$ and $m$ are nonnegative constants with $m\leq\frac{N}{N-2}$ if
$N\geq3.$ Here $H=L^{2}(\Omega),~V=H_{0}^{1}(\Omega)$, $V^{\prime}%
=H^{-1}(\Omega),$ $a(v,w)=\int_{\Omega}\nabla v\nabla wdx$, and $F$ is the
function defined on $H_{0}^{1}(\Omega)$ by:%
\[
F(v)=\int_{\Omega}\int_{0}^{v(x)}f(s)dsdx.
\]
Using Sobolev's inequalities, one can easily verify that the function
$v\mapsto f(v)$ is continuous from $H_{0}^{1}(\Omega)$ to $L^{2}(\Omega)$ and
$F$ is a $C^{1}$ convex function which satisfies the property (\ref{ppr}), in
fact%
\[
\forall v,w\in H_{0}^{1}(\Omega),F^{\prime}(v)(w)=\int_{\Omega}f(v(x))w(x)dx.
\]
\end{remark}

\section{Preliminary results}

In this section, we prove some important preliminary results which will be
very useful in the next section to prove the main theorems.

\begin{proposition}
\label{pr1}let $u$ be a solution to Eq. (E$_{\alpha}$) in the class (\ref{J}).
Assume that there exists $\nu\in\lbrack0,1+\alpha\lbrack$ such that: $\int
_{0}^{+\infty}(1+t)^{\frac{\nu}{2}}\left\vert g(t)\right\vert dt<\infty.$
Assume moreover that $u\in L^{\infty}(\mathbb{R}^{+},H)$ or $\int_{0}%
^{+\infty}(1+t)^{\nu+\alpha}\left\vert g(t)\right\vert ^{2}dt<\infty.$Then
\[
\int_{0}^{+\infty}(1+t)^{\nu-\alpha}\left\vert u^{\prime}(t)\right\vert
^{2}dt<\infty,
\]
and the energy function $\mathcal{E}$,\ given by\ (\ref{Mn}), satisfies
$\mathcal{E}(t)=\circ(t^{-\nu})$ as $t\rightarrow$ $+\infty.$
\end{proposition}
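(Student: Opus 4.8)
The plan is to base everything on the energy identity obtained by pairing (E$_{\alpha}$) with $u'(t)$: using $\frac{d}{dt}\Phi(u)=\langle\nabla\Phi(u),u'\rangle_{V^{\prime},V}$ together with the equation, one gets
$$\mathcal{E}'(t)=-\gamma(t)\left\vert u'(t)\right\vert^{2}+\langle g(t),u'(t)\rangle.$$
Since $\nu\geq0$ the hypothesis yields $g\in L^{1}(\mathbb{R}^{+},H)$, so from $\mathcal{E}'\leq\left\vert g\right\vert\sqrt{2\mathcal{E}}$ and the comparison $\frac{d}{dt}\sqrt{\mathcal{E}}\leq\frac{1}{\sqrt2}\left\vert g\right\vert$ I first record that $\mathcal{E}$ is bounded, hence $\left\vert u'\right\vert$ is bounded on $\mathbb{R}^{+}$. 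I would also dispose of the case $\nu=0$ at this stage, since then there is no correction term and the first conclusion $\int_{0}^{+\infty}(1+t)^{-\alpha}\left\vert u'\right\vert^{2}dt<\infty$ follows directly from integrating the identity and (\ref{h1}), with no boundedness of $u$ required.

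Fix $u^{\ast}\in\arg\min\Phi$, set $\psi(t)=\langle u'(t),u(t)-u^{\ast}\rangle$, and introduce the weighted Lyapunov functional
$$\mathcal{G}(t)=(1+t)^{\nu}\mathcal{E}(t)+\nu(1+t)^{\nu-1}\psi(t).$$
The weight $(1+t)^{\nu-1}$ on $\psi$ is chosen so that the troublesome term $\nu(1+t)^{\nu-1}(\Phi(u)-\Phi^{\ast})$ produced by differentiating $(1+t)^{\nu}\mathcal{E}$ is cancelled: using $\psi'=\left\vert u'\right\vert^{2}+\langle u'',u-u^{\ast}\rangle$, the equation, and the convexity inequality $\langle\nabla\Phi(u),u-u^{\ast}\rangle_{V^{\prime},V}\geq\Phi(u)-\Phi^{\ast}$, the two potential contributions cancel and one is left, for $t$ large, with
$$\mathcal{G}'(t)\leq-\tfrac{K}{2}(1+t)^{\nu-\alpha}\left\vert u'\right\vert^{2}+(1+t)^{\nu}\langle g,u'\rangle+R(t),$$
where the negativity of the kinetic coefficient for large $t$ uses $(1+t)^{\nu-1}=\circ((1+t)^{\nu-\alpha})$ (because $\alpha<1$) together with (\ref{h1}), and $R$ collects $\nu(\nu-1)(1+t)^{\nu-2}\psi$, $\nu(1+t)^{\nu-1}\langle g,u-u^{\ast}\rangle$ and $-\nu\gamma(1+t)^{\nu-1}\psi$.

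The next step is to show $R\in L^{1}$ and to absorb the source $(1+t)^{\nu}\langle g,u'\rangle$; here the range $\nu\in[0,1+\alpha[$ is decisive. Applying Young's inequality to $-\nu\gamma(1+t)^{\nu-1}\psi$ and using $\gamma(t)\leq C(1+t)^{-\alpha}$ (from (\ref{h2})) produces, after absorbing a small multiple of the dissipation, a term of size $(1+t)^{\nu-2-\alpha}\left\vert u-u^{\ast}\right\vert^{2}$, integrable exactly when $\nu<1+\alpha$; the other pieces of $R$ are integrable because $\nu<2$. For the source I split according to the two hypotheses. If $u\in L^{\infty}(\mathbb{R}^{+},H)$, then $\left\vert u-u^{\ast}\right\vert$ is bounded, $R\in L^{1}$, and I estimate $(1+t)^{\nu}\left\vert g\right\vert\left\vert u'\right\vert=(1+t)^{\nu/2}\left\vert g\right\vert\cdot(1+t)^{\nu/2}\left\vert u'\right\vert$; a Young bound on $\psi$ gives $(1+t)^{\nu}\left\vert u'\right\vert^{2}\leq C(\mathcal{G}+C)$, so $(1+t)^{\nu/2}\left\vert u'\right\vert\leq C\sqrt{\mathcal{G}+C}$, and since $(1+t)^{\nu/2}\left\vert g\right\vert\in L^{1}$ one obtains $\frac{d}{dt}\sqrt{\mathcal{G}+C}\leq a(t)+b(t)$ with $a,b\in L^{1}(\mathbb{R}^{+})$, whence $\mathcal{G}$ is bounded. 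If instead $\int_{0}^{+\infty}(1+t)^{\nu+\alpha}\left\vert g\right\vert^{2}dt<\infty$, I would treat $(1+t)^{\nu}\langle g,u'\rangle$ by Young against the dissipation, leaving the integrable term $(1+t)^{\nu+\alpha}\left\vert g\right\vert^{2}$; the delicate point now is that $\left\vert u-u^{\ast}\right\vert$ is not a priori bounded, so one must first establish $u\in L^{\infty}$ through a preliminary estimate on $m(t)=\tfrac12\left\vert u(t)-u^{\ast}\right\vert^{2}$, whose inequality $m''+\gamma m'\leq\left\vert u'\right\vert^{2}+\left\vert g\right\vert\sqrt{2m}$ can be closed using the $\nu=0$ conclusion $\int(1+t)^{-\alpha}\left\vert u'\right\vert^{2}<\infty$ and the $L^{2}$ hypothesis on $g$.

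Boundedness of $\mathcal{G}$ gives $\mathcal{E}(t)=O(t^{-\nu})$, and integrating the differential inequality yields $\int_{0}^{+\infty}(1+t)^{\nu-\alpha}\left\vert u'\right\vert^{2}dt<\infty$, the first assertion. To upgrade $O(t^{-\nu})$ to $\circ(t^{-\nu})$ I would argue that $\mathcal{G}$ actually converges: the inequality shows $\mathcal{G}(t)-\int_{0}^{t}\ell$ is nonincreasing for an $L^{1}$ majorant $\ell$ of the positive part of its right-hand side, while a Young estimate on the correction gives $\mathcal{G}(t)\geq-C(1+t)^{\nu-2}$, bounded below since $\nu<2$; hence $\lim_{t\to+\infty}\mathcal{G}(t)$ exists and is finite. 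On the other hand $\int_{0}^{+\infty}(1+t)^{\nu-\alpha}\left\vert u'\right\vert^{2}dt<\infty$ forces $\liminf_{t\to+\infty}(1+t)^{\nu}\left\vert u'(t)\right\vert^{2}=0$, which controls the kinetic part; combined with the existence of the limit of $\mathcal{G}$, this should pin the limit of $(1+t)^{\nu}\mathcal{E}(t)$ at $0$ and give $\mathcal{E}(t)=\circ(t^{-\nu})$. I expect this passage from $O$ to $\circ$, and in the second case the a priori boundedness of $u$, to be the main obstacles: because the correction carries the weight $(1+t)^{\nu-1}$, which is only borderline when $\nu\geq1$, controlling the potential part of the energy and the term $\psi$ in the limit requires a careful treatment rather than crude bounds.
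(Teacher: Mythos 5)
Your functional $\mathcal{G}(t)=(1+t)^{\nu}\mathcal{E}(t)+\nu(1+t)^{\nu-1}\langle u'(t),u(t)-u^{\ast}\rangle$ is essentially the differential form of the paper's own argument: the paper multiplies the inequality for $p(t)=\frac12\vert u(t)-\bar u\vert^2$ by $\lambda_{\nu}'$ and integrates by parts, which produces exactly your cross term, and both arguments use the same convexity cancellation, the same absorption of $\lambda_{\nu}'\vert u'\vert^2$ by $\lambda_{\nu}\gamma\vert u'\vert^2$ (via $\alpha<1$), and the same Gronwall step against the weight $(1+t)^{\nu/2}\vert g\vert$. In the case $u\in L^{\infty}(\mathbb{R}^+,H)$ your plan is workable up to one slip: the piece $\nu(\nu-1)(1+t)^{\nu-2}\psi$ of $R$ is \emph{not} integrable ``because $\nu<2$''; with only $\vert u'\vert$ and $\vert u-u^{\ast}\vert$ bounded it is $O((1+t)^{\nu-2})$, integrable only for $\nu<1$, while $\nu$ may be as large as $1+\alpha>1$. (This is repairable by Young against the dissipation, as you do for the $\gamma$-term, since $\nu<3-\alpha$.) The genuine gaps are the two points you yourself flag as ``obstacles,'' and neither is resolved by your plan.

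First, the second hypothesis ($\int_0^{+\infty}(1+t)^{\nu+\alpha}\vert g\vert^2dt<\infty$, no boundedness of $u$): your bootstrap for $u\in L^{\infty}$ cannot close. Integrating $m''+\gamma m'\le\vert u'\vert^2+\vert g\vert\sqrt{2m}$ against the damping kernel costs a factor $(1+s)^{\alpha}$ (by Lemma \ref{le1} the kernel $e^{-\Gamma(t,s)}$ has mass of order $(1+s)^{\alpha}/K$), so you would need $\int_0^{+\infty}(1+s)^{\alpha}\vert u'\vert^2ds<\infty$; the $\nu=0$ step gives only $\int_0^{+\infty}(1+s)^{-\alpha}\vert u'\vert^2ds<\infty$, and obtaining the former is precisely the proposition with $\nu=2\alpha$ --- a circularity. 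The paper never proves $u\in L^{\infty}$ in this case: its integrated-by-parts functional retains \emph{positive multiples of $p$ itself} (the term $(\lambda_{\nu}'\gamma-\lambda_{\nu}'')p$ pointwise, and $-\lambda_{\nu}^{(3)}+(\lambda_{\nu}'\gamma)'\le-\mu(1+t)^{\nu-2-\alpha}$ in the integrand, valid exactly because $\nu<1+\alpha$), and every $p$-dependent error is absorbed via $-ax^2+bx\le b^2/(4a)$ at the price of the integrable quantity $(1+t)^{\nu+\alpha}\vert g\vert^2$. Your $\mathcal{G}$ produces no negative $p$-term, so unbounded $u$ is out of reach of your framework as designed. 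Second, the upgrade from $O(t^{-\nu})$ to $\circ(t^{-\nu})$: convergence of $\mathcal{G}$ plus $\liminf_{t\to+\infty}(1+t)^{\nu}\vert u'\vert^2=0$ yields only that $(1+t)^{\nu}\mathcal{E}(t)$ converges to $\lim\mathcal{G}\ge0$; nothing in your argument forces this limit to vanish, since the potential part $(1+t)^{\nu}(\Phi(u)-\Phi^{\ast})$ could a priori tend to a positive constant. The paper's mechanism is the additional integrability $\int_0^{+\infty}(1+t)^{\nu-1}\mathcal{E}(t)dt<\infty$, obtained because its inequality keeps \emph{half} of $\lambda_{\nu}'\mathcal{E}$ on the favorable side rather than cancelling the potential contribution entirely; a nonzero limit $m$ would then make $(1+t)^{\nu-1}\mathcal{E}\sim$ const$/(1+t)$, non-integrable --- contradiction. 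Your complete convexity cancellation destroys exactly this piece of information, so the final step of your plan is missing an essential idea, not merely ``careful treatment.''
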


\begin{proof}
The proof of this proposition makes use of a modified version of a method
introduced by Cabot et Frankel in \cite{CF} and developed in
\cite{M}. Let $\bar{u}\in\arg\min\Phi$ and define the function $p:\mathbb{R}%
^{+}\rightarrow\mathbb{R}^{+}$ by $p(t)=\frac{1}{2}\left\vert u(t)-\bar
{u}\right\vert ^{2}.$ Since $u$ is in the class (\ref{J}), the function $p$
belongs to the space $W_{loc}^{2,1}(\mathbb{R}^{+},\mathbb{R}^{+})$ and
satisfies almost everywhere on $\mathbb{R}^{+}$%
\begin{align}
p^{\prime\prime}(t)+\gamma(t)p^{\prime}(t)  &  =\left\vert u^{\prime
}(t)\right\vert ^{2}+\langle\nabla\Phi(u(t)),\bar{u}-u(t)\rangle+\langle
g(t),u(t)-\bar{u}\rangle\nonumber\\
&  =\left\vert u^{\prime}(t)\right\vert ^{2}+\langle\nabla\Phi(u(t)),\bar
{u}-u(t)\rangle_{V^{\prime},V}+\langle g(t),u(t)-\bar{u}\rangle\nonumber\\
&  \leq\left\vert u^{\prime}(t)\right\vert ^{2}+\Phi(\bar{u})-\Phi
(u(t))+\left\vert g(t)\right\vert \sqrt{2p(t)}\nonumber\\
&  =\frac{3}{2}\left\vert u^{\prime}(t)\right\vert ^{2}-\mathcal{E}%
(t)+\left\vert g(t)\right\vert \sqrt{2p(t)}, \label{H}%
\end{align}
where we have used (\ref{Kh}) and the convexity inequality $\Phi(\bar{u}%
)\geq\Phi(u(t))+\langle\nabla\Phi(u(t)),\bar{u}-u(t)\rangle_{V^{\prime},V}.$
On the other hand the energy function $\mathcal{E}$ belongs to $W_{loc}%
^{1,1}(\mathbb{R}^{+},\mathbb{R})$ and satisfies for almost every $t\geq0,$%
\begin{align}
\mathcal{E}^{\prime}(t)  &  =\langle u^{\prime\prime}(t),u^{\prime}%
(t)\rangle+\langle\nabla\Phi(u(t)),u^{\prime}(t)\rangle_{V^{\prime}%
,V}\nonumber\\
&  =\langle u^{\prime\prime}(t),u^{\prime}(t)\rangle+\langle\nabla
\Phi(u(t)),u^{\prime}(t)\rangle\nonumber\\
&  =-\gamma(t)\left\vert u^{\prime}\right\vert ^{2}+\langle g(t),u^{\prime
}(t)\rangle. \label{He}%
\end{align}
For every $r\in\mathbb{R},$ we define the function $\lambda_{r}$ on
$\mathbb{R}^{+}$ by $\lambda_{r}(t)=(1+t)^{r}.$ In view of (\ref{He}),%
\begin{equation}
(\lambda_{\nu}\mathcal{E})^{\prime}=\lambda_{\nu}^{\prime}\mathcal{E}%
-\lambda_{\nu}\gamma\left\vert u^{\prime}\right\vert ^{2}+\lambda_{\nu}\langle
g,u^{\prime}\rangle. \label{hedi1}%
\end{equation}
Hence,%
\begin{equation}
\lambda_{\nu}\gamma\left\vert u^{\prime}\right\vert ^{2}\leq\lambda_{\nu
}^{\prime}\mathcal{E}-(\lambda_{\nu}\mathcal{E})^{\prime}+\lambda_{\frac{\nu
}{2}}\left\vert g\right\vert \sqrt{2\lambda_{\nu}\mathcal{E}}. \label{Hedi2}%
\end{equation}
Since $\gamma$ satisfies (\ref{h1}) with $\alpha<1,$ $\lambda_{\nu}^{\prime
}(t)\left\vert u^{\prime}(t)\right\vert ^{2}=\circ(\lambda_{\nu}%
(t)\gamma(t)\left\vert u^{\prime}(t)\right\vert ^{2})$ as $t\rightarrow
+\infty.$ Then there exists $t_{1}\geq t_{0}$ such that%
\begin{equation}
\frac{3}{2}\lambda_{\nu}^{\prime}(t)\left\vert u^{\prime}(t)\right\vert
^{2}\leq\frac{1}{2}\lambda_{\nu}(t)\gamma(t)\left\vert u^{\prime
}(t)\right\vert ^{2}\text{ a.e. }t\geq t_{1}. \label{HH}%
\end{equation}
Thus, by multiplying the inequality (\ref{H}) by $\lambda_{\nu}^{\prime}(t)$
and using (\ref{Hedi2})-(\ref{HH}), we obtain%
\[
\frac{1}{2}\lambda_{\nu}^{\prime}\mathcal{E}+\frac{1}{2}(\lambda_{\nu
}\mathcal{E)}^{\prime}\leq-\lambda_{\nu}^{\prime}p^{\prime\prime}-\lambda
_{\nu}^{\prime}\gamma p^{\prime}+\lambda_{\nu}^{\prime}\left\vert g\right\vert
\sqrt{2p}+\frac{1}{2}\lambda_{\frac{\nu}{2}}\left\vert g\right\vert
\sqrt{2\lambda_{\nu}\mathcal{E}}\text{,}%
\]
almost everywhere on $[t_{1},\infty\lbrack.$
\par\noindent Integrating this last inequality
between $t_{1}$ and $t\geq t_{1},$ we get after integrations by parts%
\begin{equation}
\frac{1}{2}\int_{t_{1}}^{t}\lambda_{\nu}^{\prime}\mathcal{E}ds+\frac{1}%
{2}(\lambda_{\nu}\mathcal{E})(t)\leq C_{0}+A(t)+B(t)+C(t), \label{Rz}%
\end{equation}
where%
\begin{align*}
C_{0}  &  =\frac{1}{2}(\lambda_{\nu}\mathcal{E)}(t_{1})+(\lambda_{\nu}%
^{\prime}p^{\prime})(t_{1})-(\lambda_{\nu}^{\prime\prime}p)(t_{1}%
)+(\lambda_{\nu}^{\prime}\gamma p)(t_{1}),\\
A(t)  &  =-(\lambda_{\nu}^{\prime}p^{\prime})(t)+(\lambda_{\nu}^{\prime\prime
}p)(t)-(\lambda_{\nu}^{\prime}\gamma p)(t),\\
B(t)  &  =\int_{t_{1}}^{t}(-\lambda_{\nu}^{(3)}+(\lambda_{\nu}^{\prime}%
\gamma)^{\prime})p+\lambda_{\nu}^{\prime}\left\vert g\right\vert \sqrt
{2p}ds,\\
C(t)  &  =\int_{t_{1}}^{t}\lambda_{\frac{\nu}{2}}\left\vert g\right\vert
\sqrt{\lambda_{\nu}\mathcal{E}}ds.
\end{align*}
Let us estimate separately $A(t),B(t),$ and $C(t).$ Firstly, by using the fact
that $\sqrt{\lambda_{\nu}\mathcal{E}}\leq1+\lambda_{\nu}\mathcal{E},$ we get%
\begin{equation}
C(t)\leq\int_{0}^{+\infty}(1+s)^{\frac{\nu}{2}}\left\vert g(s)\right\vert
ds+\int_{t_{1}}^{t}\lambda_{\frac{\nu}{2}}\left\vert g\right\vert \lambda
_{\nu}\mathcal{E}ds. \label{Meriem1}%
\end{equation}
On the other hand, in view of (\ref{h1})%
\begin{align*}
A(t)  &  \leq\lambda_{\nu}^{\prime}(t)\left\vert \langle u^{\prime
}(t),u(t)-\bar{u}\rangle\right\vert -\nu\lbrack K-(\nu-1)(1+t)^{\alpha
-1}](1+t)^{\nu-\alpha-1}p(t)\\
&  \leq2\lambda_{\nu}^{\prime}(t)\sqrt{\mathcal{E(}t\mathcal{)}}\sqrt
{p(t)}-\nu\lbrack K-(\nu-1)(1+t)^{\alpha-1}](1+t)^{\nu-\alpha-1}p(t).
\end{align*}
Therefore, since $\alpha<1,$ there exists $t_{2}\geq t_{1}$ such that for
every $t\geq t_{2},$%
\[
A(t)\leq2\lambda_{\nu}^{\prime}(t)\sqrt{\mathcal{E(}t\mathcal{)}}\sqrt
{p(t)}-\frac{\nu K}{2}(1+t)^{\nu-\alpha-1}p(t).
\]
Using now the elementary inequality%
\begin{equation}
\forall a>0~\forall b,x\in\mathbb{R},~-ax^{2}+bx\leq\frac{b^{2}}{4a}
\label{Meriem2}%
\end{equation}
with $x=\sqrt{p(t)},$ we get%
\[
A(t)\leq\frac{2\nu}{K}(1+t)^{\nu+\alpha-1}\mathcal{E(}t)~\forall t\geq t_{2}.
\]
Using once again the fact that $\alpha<1,$ we infer the existence of
$t_{3}\geq t_{2}$ such that
\begin{equation}
A(t)\leq\frac{1}{4}\lambda_{\nu}(t)\mathcal{E}(t)~\forall t\geq t_{3}.
\label{Hedi 3}%
\end{equation}
Let us now prove that the function $B$ is bounded. To this end we first notice
that, thanks to (\ref{h1}) and (\ref{h2}), we have for almost every $t\geq t_{1}$%
\begin{align*}
-\lambda_{\nu}^{(3)}(t)+(\lambda_{\nu}^{\prime}\gamma)^{\prime}(t)  &
\leq-\lambda_{\nu}^{(3)}(t)+\lambda_{\nu}^{\prime\prime}\gamma-\alpha
\lambda_{\nu}^{\prime}(t)\frac{\gamma(t)}{(1+t)}\\
&  \leq-\lambda_{\nu}^{(3)}(t)-\nu K(1+\alpha-\nu)(1+t)^{\nu-2-a}.
\end{align*}
Since $\nu<1+\alpha$ and $\alpha<1,$ there exists $t_{4}\geq t_{3}$ such that
for almost every $t\geq t_{4},$%
\begin{equation}
-\lambda_{\nu}^{(3)}(t)+(\lambda_{\nu}^{\prime}\gamma)^{\prime}(t)\leq
-\mu(1+t)^{\nu-2-a}, \label{Hedi 4}%
\end{equation}
where $\mu=\frac{\nu K(1+\alpha-\nu)}{2}>0.$ Therefore, if $u\in L^{\infty
}(\mathbb{R}^{+},H)$ then for every $t\geq t_{4}$ we have
\begin{align*}
B(t)  &  \leq B(t_{4}) +\sqrt{\sup_{t\geq0}2p(t)}\int_{0}^{+\infty}%
\lambda_{\nu}^{\prime}\left\vert g\right\vert dt\\
&  \leq B(t_{4}) +\nu\sqrt{\sup_{t\geq0}2p(t)}\int_{0}^{+\infty}%
(1+t)^{\frac{\nu}{2}}\left\vert g\right\vert dt
\end{align*}
Let us now examine the boundedness of the function $B$ under the other
hypothesis $\int_{0}^{+\infty}(1+t)^{\nu+\alpha}\left\vert g(t)\right\vert
^{2}dt<\infty.$ By using (\ref{Hedi 4}) and the inequality (\ref{Meriem2})
with $x=\sqrt{p(t)}$ we easily get that for every $t\geq t_{4}$%
\begin{align*}
B(t)  &  \leq B\left(  t_{4}\right)  +\frac{2\nu^{2}}{\mu}\int_{t_{4}}%
^{t}(1+s)^{\nu+\alpha}\left\vert g(s)\right\vert ^{2}dt\\
&  \leq B\left(  t_{4}\right)  +\frac{2\nu^{2}}{\mu}\int_{0}^{+\infty
}(1+s)^{\nu+\alpha}\left\vert g(s)\right\vert ^{2}dt.
\end{align*}
Coming back to (\ref{Rz}) and using the estimates (\ref{Meriem1}%
)-(\ref{Hedi 3}) and the boundedness of the function $B,$ we infer the
existence of a constant $C_{1}\geq0$ such that for every $t\geq t_{4},$%
\[
\frac{1}{2}\int_{t_{1}}^{t}\lambda_{\nu}^{\prime}\mathcal{E}ds+\frac{1}%
{4}(\lambda_{\nu}\mathcal{E})(t)\leq C_{1}+\int_{t_{1}}^{t}\lambda_{\frac{\nu
}{2}}\left\vert g\right\vert \lambda_{\nu}\mathcal{E}dt.
\]
Therefore, by applying Gronwall's inequality we first get that $\sup_{t\geq
t_{1}}\lambda_{\nu}(t)\mathcal{E(}t\mathcal{)}<+\infty$ and then we deduce
that $\int_{t_{1}}^{+\infty}\lambda_{\nu}^{\prime}(t)\mathcal{E}(t)dt<+\infty
$. Recalling that the energy function $\mathcal{E}$ is continuous and hence
locally bounded on\thinspace$\mathbb{R}^{+},$ we infer that
\begin{equation}
\int_{0}^{+\infty}\lambda_{\nu}^{\prime}(t)\mathcal{E}(t)dt<+\infty\label{R}%
\end{equation}
and%
\begin{equation}
\sup_{t\geq0}\lambda_{\nu}(t)\mathcal{E(}t\mathcal{)}<+\infty. \label{r}%
\end{equation}
Hence by using the equality (\ref{hedi1}) we obtain
\begin{align*}
\int_{0}^{+\infty}[(\lambda_{\nu}\mathcal{E})^{\prime}]_{+}dt  &  \leq\int
_{0}^{+\infty}\lambda_{\nu}^{\prime}\mathcal{E}dt+\sqrt{\sup_{t\geq0}%
2\lambda_{\nu}(t)\mathcal{E}(t)}\int_{0}^{+\infty}\mathcal{\lambda}_{\frac
{\nu}{2}}\left\vert g\right\vert dt\\
&  <+\infty,
\end{align*}
where $[(\lambda_{\nu}\mathcal{E})^{\prime}]_{+}$ is the positive part of
$(\lambda_{\nu}\mathcal{E})^{\prime}.$ The last inequality implies that
$\lambda_{\nu}(t)\mathcal{E(}t\mathcal{)}$ converges as $t$ goes to $+\infty$
to some real number $m.$ If $m\neq0$ then $\lambda_{\nu}^{\prime
}(t)\mathcal{E}(t)=\frac{\lambda_{\nu}(t)\mathcal{E(}t\mathcal{)}}{\nu
(1+t)}\sim\frac{m}{\nu(1+t)}$ as $t\rightarrow+\infty$ which contradicts the
result (\ref{R}). Thus $m=0$ and therefore $\mathcal{E}(t)=\circ(t^{-\nu})$ as
$t\rightarrow+\infty.$ Finally, by using the inequality (\ref{Hedi2}), we
obtain
\[
\int_{0}^{+\infty}\lambda_{\nu}\gamma\left\vert u^{\prime}\right\vert
^{2}dt\leq\int_{0}^{+\infty}\lambda_{\nu}^{\prime}\mathcal{E}dt+\mathcal{E}%
(0)+\sqrt{\sup_{t\geq0}2\lambda_{\nu}(t)\mathcal{E}(t)}\int_{0}^{+\infty
}\mathcal{\lambda}_{\frac{\nu}{2}}\left\vert g\right\vert dt.
\]
In view of (\ref{R}) and (\ref{r}), the right hand side of the previous
inequality is finite, then thanks to the hypothesis (\ref{h1}) we conclude
that
\[
\int_{0}^{+\infty}(1+t)^{\nu-\alpha}\left\vert u^{\prime}(t)\right\vert
^{2}dt<+\infty
\]
as desired.
\end{proof}

\begin{proposition}
\label{pr2}Let $u$ be a solution to Eq. (E$_{\alpha}$). Assume that the
integrals $\int_{0}^{+\infty}(1+t)^{\alpha}\left\vert g(t)\right\vert dt$ and
$\int_{0}^{+\infty}(1+t)^{\alpha}\left\vert u^{\prime}(t)\right\vert ^{2}dt$
are finite and $\Phi(u(t))\rightarrow\Phi^{\ast}$ as $t\rightarrow+\infty.$
Then $u(t)$ converges weakly in $V$ as $t\rightarrow+\infty$ toward some
element $u_{\infty}$ of $\arg\min\Phi.$
\end{proposition}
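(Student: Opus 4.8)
The plan is to apply a version of Opial's lemma: if $S\subset H$ is nonempty, $\lim_{t\to\infty}\left\vert u(t)-z\right\vert$ exists for every $z\in S$, and every weak cluster point of $u(t)$ lies in $S$, then $u(t)$ converges weakly to an element of $S$. I will take $S=\arg\min\Phi$ and, at the end, upgrade the weak convergence from $H$ to $V$. The three ingredients to produce are therefore: existence of the anchored limits, the cluster-point property, and uniqueness of the cluster point.

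\emph{Step 1 (the anchored distance has a limit).} Fix $\bar u\in\arg\min\Phi$ and set $p(t)=\tfrac12\left\vert u(t)-\bar u\right\vert^{2}$ as in Proposition~\ref{pr1}. Exactly as in the computation leading to (\ref{H}), convexity of $\Phi$ yields
\[
p''+\gamma p'\le \left\vert u'\right\vert^{2}+\left\vert g\right\vert\sqrt{2p}\quad\text{a.e. on }\mathbb{R}^{+}.
\]
Introducing the integrating factor $\Gamma(t)=\exp\!\big(\int_{t_{1}}^{t}\gamma\big)$, this becomes $(\Gamma p')'\le \Gamma(\left\vert u'\right\vert^{2}+\left\vert g\right\vert\sqrt{2p})$. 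Integrating, dividing by $\Gamma$, taking positive parts and integrating again in $t$, Fubini's theorem reduces the whole estimate to the weight $\omega(s):=\Gamma(s)\int_{s}^{\infty}\Gamma(t)^{-1}\,dt$. From (\ref{h1}) and the elementary bound $\int_{s}^{t}\gamma\ge \frac{K}{1-\alpha}\big[(1+t)^{1-\alpha}-(1+s)^{1-\alpha}\big]$, together with $\alpha<1$, a Laplace-type estimate gives $\omega(s)\le C(1+s)^{\alpha}$ for $s$ large. Hence
\[
\int_{t_{1}}^{\infty}[p']_{+}\,dt\le C\!\int_{0}^{\infty}(1+s)^{\alpha}\left\vert u'\right\vert^{2}ds+C\!\int_{0}^{\infty}(1+s)^{\alpha}\left\vert g\right\vert\sqrt{2p}\,ds+\text{const}.
\]
The first integral is finite by hypothesis. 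The genuine difficulty is the factor $\sqrt p$ in the last term, which creates a circular dependence on the size of $p$. I would close it by a bootstrap: writing $P(t)=\sup_{[t_{1},t]}p$ and $p(t)\le p(t_{1})+\int_{t_{1}}^{t}[p']_{+}$, the inequality takes the form $P(t)\le a+b\sqrt{P(t)}$ with $a,b$ independent of $t$, forcing $\sup p<\infty$. Feeding this back gives $\int_{t_{1}}^{\infty}[p']_{+}<\infty$; since $p\ge0$ this forces $p'\in L^{1}$, and therefore $\lim_{t\to\infty}\left\vert u(t)-\bar u\right\vert$ exists.

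\emph{Step 2 (cluster points are minimizers).} Applying Step 1 to one fixed $\bar u$ shows $u$ is bounded in $H$. Since $\Phi(u(t))\to\Phi^{\ast}$ is bounded, the semi-coercivity of $a$ together with the affine minorization $F(v)\ge F(\bar u)+\langle f(\bar u),v-\bar u\rangle_{V',V}$ of the convex function $F$ gives $\tfrac{\mu}{2}\left\Vert u\right\Vert_{V}^{2}\le \Phi(u(t))+C\left\Vert u\right\Vert_{V}+C'$, so $u$ is bounded in $V$ as well. Consequently $u$ admits weak cluster points in the reflexive space $V$. If $u(t_{n})\rightharpoonup v_{\infty}$ weakly in $V$, then weak lower semicontinuity of the convex continuous function $\Phi$ gives $\Phi(v_{\infty})\le\liminf_{n}\Phi(u(t_{n}))=\Phi^{\ast}$; since $\Phi\ge\Phi^{\ast}$ this forces $v_{\infty}\in\arg\min\Phi$.

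\emph{Step 3 (uniqueness of the cluster point and conclusion).} Because $V\hookrightarrow H$, weak convergence in $V$ implies weak convergence in $H$. Let $v_{1},v_{2}\in\arg\min\Phi$ be two weak-$V$ cluster points, reached along $t_{n}$ and $s_{n}$. By Step 1 both $\lim_{t}\left\vert u(t)-v_{i}\right\vert^{2}$ exist, hence so does $\lim_{t}\langle u(t),v_{1}-v_{2}\rangle$. Evaluating this single limit along $t_{n}$ and along $s_{n}$ and using weak-$H$ convergence gives $\langle v_{1},v_{1}-v_{2}\rangle=\langle v_{2},v_{1}-v_{2}\rangle$, i.e. $\left\vert v_{1}-v_{2}\right\vert=0$, so $v_{1}=v_{2}$ in $H$ and hence in $V$. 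Thus $u$ is bounded in $V$ with a unique weak cluster point $u_{\infty}\in\arg\min\Phi$, which forces $u(t)\rightharpoonup u_{\infty}$ weakly in $V$. The main obstacle throughout is Step 1: removing the circular dependence between the boundedness of $p$ and the $\sqrt p$ source term, for which the weight estimate $\omega(s)\le C(1+s)^{\alpha}$ and the Gronwall-type bootstrap are the crucial ingredients.
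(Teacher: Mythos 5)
Your proof is correct and follows essentially the same route as the paper: the anchor function $p(t)=\tfrac12\left\vert u(t)-\bar u\right\vert^{2}$ with its differential inequality, the kernel estimate $\int_{s}^{+\infty}e^{-\Gamma(t,s)}dt\leq C(1+s)^{\alpha}$ (your weight $\omega(s)$ is exactly the paper's Lemma \ref{le1}), and Opial's lemma with $S=\arg\min\Phi$, followed by the upgrade of the weak limit from $H$ to $V$ via $V$-boundedness. The only differences are in execution: you close the $\sqrt{p}$-circularity with the quadratic bootstrap $P\leq a+b\sqrt{P}$ where the paper uses $\sqrt{2p}\leq 1+2p$ together with Gronwall's inequality, you derive $u\in L^{\infty}(\mathbb{R}^{+},V)$ directly from semi-coercivity and the affine minorization of $F$ where the paper cites \cite[Remark 3.4]{CF}, and you prove the Opial uniqueness step inline rather than quoting the lemma.
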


The proof of this proposition repose on the classical Opial's lemma \cite{Op}
(see \cite{AGR} for a simple proof) and an elementary lemma which will be also
used to prove Theorem \ref{th3} and Theorem \ref{th4}. Let us first recall
Opial's lemma.

\begin{lemma}
[Opial's lemma]Let $x:[t_{0},+\infty\lbrack\rightarrow\mathcal{H}.$ Assume
that there exists a nonempty subset $S$ of $\mathcal{H}$ such that:

\begin{enumerate}
\item[(i)] If $t_{n}\rightarrow+\infty$ and $x(t_{n})\rightharpoonup x$ weakly
in $\mathcal{H}$ , then $x\in S.$

\item[(ii)] For every $z\in S,$ $\lim_{t\rightarrow+\infty}\left\Vert
x(t)-z\right\Vert $ exists.
\end{enumerate}
\noindent Then there exists $z_{\infty}\in S$ such that $x(t)\rightharpoonup
z_{\infty}$ weakly in $\mathcal{H}$ as $t\rightarrow+\infty.$
\end{lemma}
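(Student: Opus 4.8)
The plan is to establish three facts in turn: that the trajectory $x(\cdot)$ is bounded, that its set of weak cluster points as $t\to+\infty$ is a nonempty subset of $S$, and that this set is in fact a singleton; weak convergence then follows immediately. First, fixing any $z_{0}\in S$ (possible since $S\neq\varnothing$), hypothesis (ii) says that $t\mapsto\left\Vert x(t)-z_{0}\right\Vert $ converges, hence is bounded; since $\left\Vert x(t)\right\Vert \leq\left\Vert x(t)-z_{0}\right\Vert +\left\Vert z_{0}\right\Vert $, the trajectory $x(\cdot)$ is bounded in $\mathcal{H}$. Because $\mathcal{H}$ is a Hilbert space, its bounded sets are relatively weakly compact, so along any sequence $t_{n}\to+\infty$ a subsequence of $(x(t_{n}))$ converges weakly, and by (i) its limit lies in $S$. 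Thus the weak cluster set of $x(t)$ is nonempty and contained in $S$.

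The heart of the argument is to show that this cluster set reduces to a single point. I would suppose that $z_{1},z_{2}\in S$ are weak cluster points, realized along sequences $s_{n}\to+\infty$ and $t_{n}\to+\infty$ with $x(s_{n})\rightharpoonup z_{1}$ and $x(t_{n})\rightharpoonup z_{2}$, and exploit the polarization identity
\[
\left\Vert x(t)-z_{1}\right\Vert ^{2}-\left\Vert x(t)-z_{2}\right\Vert ^{2}=2\langle x(t),z_{2}-z_{1}\rangle+\left\Vert z_{1}\right\Vert ^{2}-\left\Vert z_{2}\right\Vert ^{2}.
\]
By (ii) each norm on the left-hand side converges as $t\to+\infty$, hence so does the left-hand side, and therefore $\langle x(t),z_{2}-z_{1}\rangle$ converges to some real number $\ell$. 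Passing to the limit along $(s_{n})$ gives $\ell=\langle z_{1},z_{2}-z_{1}\rangle$, while passing to the limit along $(t_{n})$ gives $\ell=\langle z_{2},z_{2}-z_{1}\rangle$. Subtracting, $\langle z_{1}-z_{2},z_{2}-z_{1}\rangle=0$, that is $\left\Vert z_{1}-z_{2}\right\Vert ^{2}=0$, so $z_{1}=z_{2}$.

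Finally, writing $z_{\infty}$ for the unique weak cluster point, I would deduce that the bounded trajectory converges weakly to it: if this failed, there would exist $w\in\mathcal{H}$, $\varepsilon>0$ and a sequence $\tau_{n}\to+\infty$ with $\left\vert \langle x(\tau_{n})-z_{\infty},w\rangle\right\vert \geq\varepsilon$; but boundedness of $(x(\tau_{n}))$ would furnish a weakly convergent subsequence whose limit, being a cluster point lying in $S$, must equal $z_{\infty}$, contradicting the lower bound. Hence $x(t)\rightharpoonup z_{\infty}\in S$ weakly in $\mathcal{H}$. The main obstacle is precisely the uniqueness step: the hypotheses supply only the convergence of the distances $\left\Vert x(t)-z\right\Vert $ for $z\in S$ together with the identification of weak limits, and the decisive idea is that the polarization identity converts the former into convergence of the linear functional $t\mapsto\langle x(t),z_{2}-z_{1}\rangle$, which can then be evaluated along the two cluster sequences; everything else is soft reflexivity and weak compactness in $\mathcal{H}$.
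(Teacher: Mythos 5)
Your proof is correct and complete: boundedness of the trajectory from (ii), weak sequential compactness of bounded sets in $\mathcal{H}$ plus (i) to locate all weak cluster points in $S$, the polarization identity to show the cluster point is unique, and the standard subsequence argument to upgrade uniqueness to weak convergence. Note that the paper itself gives no proof of this lemma --- it only recalls it, citing Opial's original article and the paper of Attouch, Goudou and Redont for ``a simple proof''; your argument is precisely that standard proof, so there is nothing to reconcile. (One cosmetic remark: hypothesis (ii) only guarantees that $\left\Vert x(t)-z_{0}\right\Vert$ is bounded for $t$ large, not on all of $[t_{0},+\infty[$, but this is all that is needed since every step of your argument concerns sequences $t_{n}\rightarrow+\infty$.)
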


\begin{lemma}
\label{le1}There exists $\tau_{0}\geq0$ such that for every $\tau\geq\tau_{0}$%
\[
\int_{\tau}^{+\infty}e^{-\Gamma(t,\tau)}dt\leq\frac{2}{K}(1+\tau)^{\alpha}%
\]
where $\Gamma(t,\tau)=\int_{\tau}^{t}\gamma(s)ds.$
\end{lemma}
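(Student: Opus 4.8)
\emph{Proof strategy.} The plan is to reduce the estimate to the pointwise lower bound on $\gamma$ supplied by (\ref{h1}), and then to evaluate the resulting elementary integral essentially in closed form. First I would observe that for $\tau\ge t_0$ and $t\ge\tau$, hypothesis (\ref{h1}) alone gives
\[
\Gamma(t,\tau)=\int_\tau^t\gamma(s)\,ds\ge K\int_\tau^t(1+s)^{-\alpha}\,ds=\frac{K}{1-\alpha}\bigl[(1+t)^{1-\alpha}-(1+\tau)^{1-\alpha}\bigr].
\]
Consequently $e^{-\Gamma(t,\tau)}$ is dominated by the explicit exponential on the right, so it suffices to bound $J(\tau):=\int_\tau^{\infty}\exp\!\bigl(-\tfrac{K}{\beta}[(1+t)^{\beta}-(1+\tau)^{\beta}]\bigr)\,dt$, where $\beta:=1-\alpha\in(0,1]$. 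Note that only (\ref{h1}) enters here; (\ref{h2}) is not needed for this lemma.

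Next I would perform the change of variables $u=(1+t)^{\beta}-(1+\tau)^{\beta}$, which is legitimate because $\beta>0$ makes $t\mapsto(1+t)^{\beta}$ an increasing bijection of $[\tau,\infty)$ onto $[0,\infty)$. Writing $c:=(1+\tau)^{\beta}$ and $\rho:=\tfrac{1-\beta}{\beta}=\tfrac{\alpha}{1-\alpha}\ge 0$, one finds $dt=\tfrac1\beta(u+c)^{\rho}\,du$, so that
\[
J(\tau)=\frac{1}{\beta}\int_0^{\infty}(u+c)^{\rho}e^{-(K/\beta)u}\,du=\frac{c^{\rho}}{\beta}\int_0^{\infty}\bigl(1+\tfrac{u}{c}\bigr)^{\rho}e^{-(K/\beta)u}\,du.
\]
The essential point is that $c^{\rho}=(1+\tau)^{\beta\rho}=(1+\tau)^{\alpha}$, so up to the factor $\tfrac1\beta$ the prefactor already carries exactly the desired power of $(1+\tau)$; what remains is to show the remaining integral tends to $\beta/K$.

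To control that integral without invoking dominated convergence, I would use the elementary inequality $1+x\le e^{x}$, which yields $\bigl(1+\tfrac{u}{c}\bigr)^{\rho}\le e^{\rho u/c}$. Hence, as soon as $c$ is large enough that $\tfrac{K}{\beta}-\tfrac{\rho}{c}>0$,
\[
\int_0^{\infty}\bigl(1+\tfrac{u}{c}\bigr)^{\rho}e^{-(K/\beta)u}\,du\le\int_0^{\infty}e^{-\left(\frac{K}{\beta}-\frac{\rho}{c}\right)u}\,du=\frac{1}{\frac{K}{\beta}-\frac{\rho}{c}}=\frac{\beta}{K-\beta\rho/c}.
\]
Since $\beta\rho=\alpha$, combining the last two displays gives $J(\tau)\le\dfrac{(1+\tau)^{\alpha}}{K-\alpha(1+\tau)^{-(1-\alpha)}}$. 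Because $(1+\tau)^{-(1-\alpha)}\to 0$ as $\tau\to+\infty$, the denominator exceeds $K/2$ once $(1+\tau)^{1-\alpha}\ge 2\alpha/K$; taking $\tau_0:=\max\{t_0,(2\alpha/K)^{1/(1-\alpha)}-1\}$ (and $\tau_0=t_0$ when $\alpha=0$) then yields $\int_\tau^{\infty}e^{-\Gamma(t,\tau)}\,dt\le J(\tau)\le\tfrac{2}{K}(1+\tau)^{\alpha}$ for all $\tau\ge\tau_0$.

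I expect the only delicate step to be the change of variables together with the control of the polynomial factor $(1+u/c)^{\rho}$: the factor $2$ in the target inequality is precisely the slack that absorbs the lower-order correction $-\alpha(1+\tau)^{-(1-\alpha)}$ in the denominator, so the argument hinges on making this correction uniformly small for large $\tau$, which the closed-form computation above achieves.
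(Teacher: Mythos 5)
Your proof is correct, but it takes a genuinely different route from the paper. The paper stays abstract: it writes $1\leq\frac{(1+t)^{\alpha}\gamma(t)}{K}$ under the integral (using (\ref{h1})), recognizes $\gamma(t)e^{-\Gamma(t,\tau)}=-\bigl(e^{-\Gamma(t,\tau)}\bigr)'$, integrates by parts, bounds $(1+t)^{\alpha-1}\leq(1+\tau)^{\alpha-1}$, and then \emph{absorbs} the resulting term $\frac{\alpha}{K(1+\tau)^{1-\alpha}}\int_{\tau}^{+\infty}e^{-\Gamma(t,\tau)}dt$ back into the left-hand side, choosing $\tau_{0}$ so that $\frac{\alpha}{K(1+\tau_{0})^{1-\alpha}}\leq\frac12$. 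You instead pass immediately to the explicit comparison integral via $\Gamma(t,\tau)\geq\frac{K}{1-\alpha}\bigl[(1+t)^{1-\alpha}-(1+\tau)^{1-\alpha}\bigr]$, evaluate it by the substitution $u=(1+t)^{\beta}-(1+\tau)^{\beta}$, and control the Jacobian factor with $1+x\leq e^{x}$. Amusingly, both arguments land on the identical threshold $(1+\tau_{0})^{1-\alpha}\geq 2\alpha/K$. The paper's proof is shorter and works directly with $\gamma$; but note that its absorption step tacitly requires the finiteness of $\int_{\tau}^{+\infty}e^{-\Gamma(t,\tau)}dt$ (to subtract it across the inequality) and the vanishing of the boundary term $(1+t)^{\alpha}e^{-\Gamma(t,\tau)}$ at infinity, both of which are justified precisely by the explicit lower bound on $\Gamma$ that you take as your starting point. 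Your version is therefore more self-contained on these points, at the cost of a closed-form computation, and it makes the constant $\tau_{0}$ fully explicit.
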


\begin{proof}
Let $\tau\geq t_{0}.$ In view of (\ref{h1}),
\begin{align*}
\int_{\tau}^{+\infty}e^{-\Gamma(t,\tau)}dt  &  \leq\frac{1}{K}\int_{\tau
}^{+\infty}(1+t)^{\alpha}\gamma(t)e^{-\Gamma(t,\tau)}dt\\
&  =-\frac{1}{K}\int_{\tau}^{+\infty}(1+t)^{\alpha}\left(  e^{-\Gamma(t,\tau
)}\right)  ^{\prime}dt\\
&  =\frac{1}{K}(1+\tau)^{\alpha}+\frac{\alpha}{K}\int_{\tau}^{+\infty
}(1+t)^{\alpha-1}e^{-\Gamma(t,\tau)}dt\\
&  \leq\frac{1}{K}(1+\tau)^{\alpha}+\frac{\alpha}{K(1+\tau)^{1-\alpha}}%
\int_{\tau}^{+\infty}e^{-\Gamma(t,\tau)}dt.
\end{align*}
Hence to conclude we just have to choose $\tau_{0}$ large enough such that
$\frac{\alpha}{K(1+\tau_{0})^{1-\alpha}}\leq\frac{1}{2}.$
\end{proof}

\begin{proof}
[Proof of Proposition \ref{pr2} ]Let us first prove that $u\in L^{\infty
}(\mathbb{R}^{+},V).$ Let $\bar{u}\in\arg\min\Phi$ and define, as in the proof
of Proposition \ref{pr1}, the function $p:\mathbb{R}^{+}\rightarrow
\mathbb{R}^{+}$ by $p(t)=\frac{1}{2}\left\vert u(t)-\bar{u}\right\vert ^{2}.$
This function belongs to the space $W_{loc}^{2,1}(\mathbb{R}^{+}%
,\mathbb{R}^{+})$ and satisfies almost everywhere on $\mathbb{R}^{+},$%
\begin{align*}
p^{\prime\prime}+\gamma p^{\prime}  &  =\left\vert u^{\prime}\right\vert
^{2}-\langle\nabla\Phi(u),u-\bar{u}\rangle+\langle g,u-\bar{u}\rangle\\
&  =\left\vert u^{\prime}\right\vert ^{2}-\langle\nabla\Phi(u)-\nabla\Phi
(\bar{u}),u-\bar{u}\rangle_{V^{\prime},V}+\langle g,u-\bar{u}\rangle\\
&  \leq\left\vert u^{\prime}\right\vert ^{2}+\left\vert g\right\vert \sqrt
{2p},
\end{align*}
where we have used the monotonicity of the operator $\nabla\Phi.$ 
\noindent Therefore, for almost every $t\geq\tau_{0},$%
\[
p^{\prime}(t)\leq e^{-\Gamma(t,\tau_{0})}p^{\prime}(\tau_{0})+\int_{\tau_{0}%
}^{t}e^{-\Gamma(t,s)}\rho(s)ds
\]
where $\rho:=\left\vert u^{\prime}\right\vert ^{2}+\left\vert g\right\vert
\sqrt{2p}.$

\noindent Thus, by using the previous lemma and Fubini's theorem, we get for
every $t\geq\tau_{0}$%
\begin{align}
\int_{\tau_{0}}^{t}[p^{\prime}(\tau)]_{+}d\tau &  \leq\frac{2\left(
1+\tau_{0}\right)  ^{\alpha}}{K}\left\vert p^{\prime}(\tau_{0})\right\vert
+\frac{2}{K}\int_{\tau_{0}}^{t}(1+s)^{\alpha}\rho(s)ds\nonumber\\
&  \leq c_{0}+\frac{2}{K}\int_{\tau_{0}}^{t}(1+s)^{\alpha}\left\vert
g(s)\right\vert \sqrt{2p(s)}ds \label{N1}%
\end{align}
where $c_{0}=\frac{2\left(  1+\tau_{0}\right)  ^{\alpha}}{K}\left\vert
p^{\prime}(\tau_{0})\right\vert +\frac{2}{K}\int_{0}^{+\infty}(1+s)^{\alpha
}\left\vert u^{\prime}(s)\right\vert ^{2}ds$ and $[p^{\prime}(\tau)]_{+}$ is
the positive part of $p^{\prime}(\tau).$

\noindent Using now the inequalities $\sqrt{2p}\leq1+2p$ and $p(t)\leq
p(\tau_{0})+\int_{\tau_{0}}^{t}[p^{\prime}(\tau)]_{+}d\tau,$ we obtain%
\[
p(t)\leq c_{1}+\frac{4}{K}\int_{\tau_{0}}^{t}(1+s)^{\alpha}\left\vert
g(s)\right\vert p(s)ds,~\forall t\geq\tau_{0},
\]
with $c_{1}=c_{0}+p(\tau_{0})+\frac{2}{K}\int_{0}^{+\infty}(1+s)^{\alpha
}\left\vert g(s)\right\vert ds.$ Hence, by applying Gronwall's inequality, we
deduce that the function $p$ is bounded which is equivalent to $u\in
L^{\infty}(\mathbb{R}^{+},H).$ Using now \cite[Remark 3.4]{CF}, we obtain that
$u\in L^{\infty}(\mathbb{R}^{+},V).$ Coming back to the estimate (\ref{N1}),
we infer that%
\begin{align*}
\int_{\tau_{0}}^{+\infty}[p^{\prime}(\tau)]_{+}d\tau &  \leq c_{0}+\frac{2}%
{K}\int_{0}^{+\infty}(1+s)^{\alpha}\left\vert g(s)\right\vert ds\sqrt
{\sup_{t\geq0}2p(t)}\\
&  <+\infty
\end{align*}
which implies that $\lim_{t\rightarrow+\infty}p(t)$ and therefore
$\lim_{t\rightarrow+\infty}\left\vert u(t)-\bar{u}\right\vert $ exist. Now,
let $\bar{x}\in H$ such that there exists a sequence $(t_{n})_{n}$ of positive
real numbers tending to $+\infty$ such that $u(t_{n})$ converges weakly in $H$
to $\bar{x}.$ Since $u\in L^{\infty}(\mathbb{R}^{+},V),$ $u(t_{n})$ converges
weakly also in the space $V$ to the same element $\bar{x}.$ Using now the weak
lower semi-continuity of the continuous and convex function $\Phi,$ we deduce
that $\Phi^{\ast}=\lim\inf\Phi(u(t_{n}))\leq\Phi(\bar{x}).$ Thus $\bar{x}%
\in\arg\min\Phi.$ Therefore, applying Opial's lemma with $S=\arg\min\Phi$
ensures that $u(t)$ converges weakly in $H$ as $t\rightarrow+\infty$ to some
element of $\arg\min\Phi.$ Recalling that $u\in L^{\infty}(\mathbb{R}^{+},V),$
we conclude that this weak convergence holds also in the space $V.$
\end{proof}
We close this section by proving the following simple lemma that will be used in the proof of Theorem \ref{th4}.
\begin{lemma}\label{Chill}
For every $v\in V,$
\begin{equation}
\left\vert v\right\vert \leq \left\Vert v\right\Vert
_{V^{\prime}}^{\frac{1}{2}}\left\Vert v\right\Vert _{V}^{\frac{1}{2}}.
\end{equation}
\end{lemma}
\begin{proof}
Let $v\in V$. From (\ref{Kh}),
$$\left\vert v\right\vert^2=\langle v,v\rangle=\langle v,v\rangle_{V^{\prime},V}\leq \left\Vert v\right\Vert
_{V^{\prime}}\left\Vert v\right\Vert _{V}.$$
The proof is then achieved.
\end{proof}

\section{Proof of the main results}

This section is devoted to the proof of our main theorems. Let us first notice
that Theorem \ref{th1} and Theorem \ref{th2} follow immediately from
Proposition \ref{pr1} (with $\nu=2\alpha$) and Proposition \ref{pr2}. Hence it remains to prove Theorem \ref{th3} and Theorem \ref{th4}

\begin{proof}
[Proof of Theorem \ref{th3}]The proof is based on the adaptation of a method
introduced by Bruck \cite{Br}\ for the steepest descent method and used by
Alvarez \cite{Al}\ for the heavy ball with friction system.

\noindent Since $2\alpha+1\geq3\alpha,$ then in view of Theorem \ref{th2},
Proposition \ref{pr1}, and Proposition \ref{pr2}, $u(t)$ converges weakly in
$V$ to some $u_{\infty}\in\arg\min\Phi,$ and $\int_{0}^{+\infty}(1+t)^{\alpha
}\left\vert u^{\prime}(t)\right\vert ^{2}dt<\infty.$ Let $\tau\geq\tau_{0}$
where $\tau_{0}$ is the real defined in Lemma \ref{le1}. We define the
function $q$ on the interval $[\tau_{0},\tau]$ by:
\[
q(t)=\left\vert u(t)\right\vert ^{2}-\left\vert u(\tau)\right\vert ^{2}%
-\frac{1}{2}\left\vert u(t)-u(\tau)\right\vert ^{2}.
\]
The function $q$ belongs to the space $W^{2,1}([\tau_{0},\tau],\mathbb{R})$
and satisfies almost everywhere%
\begin{equation}
q^{\prime}(t)=\langle u^{\prime}(t),u(t)+u(\tau)\rangle\label{Mo1}%
\end{equation}%
\begin{equation}
q^{\prime\prime}(t)=\left\vert u^{\prime}(t)\right\vert ^{2}+\langle
u^{\prime\prime}(t),u(t)+u(\tau)\rangle. \label{Mo2}%
\end{equation}
Combining this two equalities, we obtain%
\begin{align}
q^{\prime\prime}(t)+\gamma(t)q^{\prime}(t)  &  =\left\vert u^{\prime
}(t)\right\vert ^{2}+\langle\nabla\Phi(u),-u(\tau)-u(t)\rangle_{V^{\prime}%
,V}+\langle g(t),u(t)+u(\tau)\rangle\nonumber\\
&  \leq\left\vert u^{\prime}(t)\right\vert ^{2}+\Phi(-u(\tau))-\Phi
(u(t))+2M\left\vert g(t)\right\vert \nonumber\\
&  =\left\vert u^{\prime}(t)\right\vert ^{2}+\Phi(u(\tau))-\Phi
(u(t))+2M\left\vert g(t)\right\vert \nonumber\\
&  =\frac{3}{2}\left\vert u^{\prime}(t)\right\vert ^{2}+\mathcal{\tilde{E}%
}(\tau)-\mathcal{\tilde{E}}(t)+2M\left\vert g(t)\right\vert +\int_{t}^{\tau
}\frac{\left\vert g(s)\right\vert ^{2}}{4\gamma(s)}ds \label{M2}%
\end{align}
where $M=\sup_{t\geq0}\left\vert u(t)\right\vert $ and $\mathcal{\tilde{E}}$
is the modified energy function defined by:%
\[
\mathcal{\tilde{E}}(t)=\mathcal{E}(t)+\int_{t}^{+\infty}\frac{\left\vert
g(s)\right\vert ^{2}}{4\gamma(s)}ds,
\]
where $\mathcal{E}$ is the energy function given by (\ref{Mn}). Using
(\ref{He}), we get%
\begin{align*}
\mathcal{\tilde{E}}^{\prime}(t)  &  =-\gamma(t)\left\vert u^{\prime
}(t)\right\vert ^{2}+\langle g(t),u^{\prime}(t)\rangle-\frac{\left\vert
g(t)\right\vert ^{2}}{4\gamma(t)}\\
&  \leq-\left(  \sqrt{\gamma(t)}\left\vert u^{\prime}(t)\right\vert
-\frac{\left\vert g(t)\right\vert }{2\sqrt{\gamma(t)}}\right)  ^{2}.
\end{align*}
Therefore the function $\mathcal{\tilde{E}}$ is non increasing. Hence
(\ref{M2}) and (\ref{h1}) yield
\[
q^{\prime\prime}(t)+\gamma(t)q^{\prime}(t)\leq\omega(t),
\]
where
\[
\omega(t)=\frac{3}{2}\left\vert u^{\prime}(t)\right\vert ^{2}+2M\left\vert
g(t)\right\vert +\frac{1}{4K}\int_{t}^{+\infty}(1+s)^{\alpha}\left\vert
g(s)\right\vert ^{2}ds.
\]
Therefore, for almost every $t\in\lbrack\tau_{0},\tau],$
\begin{equation}
q^{\prime}(t)\leq e^{-\Gamma(t,\tau_{0})}\left\vert q^{\prime}(\tau
_{0})\right\vert +\int_{\tau_{0}}^{t}e^{-\Gamma(t,s)}\omega(s)ds\equiv
\kappa(t). \label{S}%
\end{equation}
A simple calculation, using Fubini's theorem and Lemma \ref{le1}, gives%
\[
\int_{\tau_{0}}^{+\infty}\kappa(t)dt\leq c_{0}+\frac{2}{K}\int_{\tau_{0}%
}^{+\infty}(1+s)^{\alpha}\omega(s)ds
\]
where $c_{0}=\frac{2}{K}(1+\tau_{0})^{\alpha}\left\vert q^{\prime}(\tau
_{0})\right\vert .$ 
\par\noindent Using once again Fubini's theorem, we get%
\[
\int_{\tau_{0}}^{+\infty}(1+t)^{\alpha}\int_{t}^{+\infty}(1+s)^{\alpha
}\left\vert g(s)\right\vert ^{2}dsdt\leq\frac{1}{\alpha+1}\int_{\tau_{0}%
}^{+\infty}(1+s)^{2\alpha+1}\left\vert g(s)\right\vert ^{2}ds.
\]
Then we deduce that the integral $\int_{\tau_{0}}^{+\infty}(1+s)^{\alpha
}\omega(s)ds$ is finite which implies
\begin{equation}
\int_{\tau_{0}}^{+\infty}\kappa(t)dt<+\infty. \label{es}%
\end{equation}
Integrating now (\ref{S}) between $t$ and $\tau,$ with $\tau_{0}\leq t\leq
\tau,$ we get%
\begin{equation}
\frac{1}{2}\left\vert u(t)-u(\tau)\right\vert ^{2}\leq\left\vert
u(t)\right\vert ^{2}-\left\vert u(\tau)\right\vert ^{2}+\int_{t}^{\tau}%
\kappa(s)ds. \label{est}%
\end{equation}
In the proof of Proposition \ref{pr2}, we showed that $\lim_{t\rightarrow
+\infty}\left\vert u(t)-\bar{u}\right\vert ^{2}$ exists\ for all $\bar{u}$ in
$\arg\min\Phi.$ But $0\in\arg\min\Phi$ since $\Phi$ is convex and even, then
$\lim_{t\rightarrow+\infty}\left\vert u(t)\right\vert ^{2}$ exists. Therefore,
(\ref{est}) and (\ref{es}) imply
\[
\left\vert u(\tau)-u(t)\right\vert \rightarrow0\text{ as }t,\tau
\rightarrow+\infty.
\]
Thus, in view of Cauchy criteria, $u(t)$ converges strongly in $H$ as
$t\rightarrow+\infty$ . Therefore, by using \cite[Corollary 3.6]{CF}, we
deduce that $u(t)$ converges strongly in $V$ as $t\rightarrow+\infty.$
Finally, since $u(t)\rightharpoonup u_{\infty}$ weakly in $V,$ we conclude
that $u(t)\rightarrow u_{\infty}$ strongly in $V.$
\end{proof}

\begin{proof}
[Proof of Theorem \ref{th4}] By assumption, there exists $x^{\ast}\in\arg\min\Phi$ and $r>0$ such
that for all $v$ in the unit Ball $B_{V}(0,1)$ of $V$ we have $\nabla
\Phi(x^{\ast}+rv)=0.$ Therefore the monotonicity of $\nabla\Phi$ implies that
for every $x\in V,~\langle\nabla\Phi(x),x-x^{\ast}-rv\rangle_{V^{\prime}%
,V}\geq0$ which yields that $\langle\nabla\Phi(x),v\rangle_{V^{\prime},V}%
\leq\frac{1}{r}\langle\nabla\Phi(x),x-x^{\ast}\rangle_{V^{\prime},V}.$ Hence
by taking the supremum on $v\in B_{V}(0,1),$ we get%
\begin{equation}
\left\Vert \nabla\Phi(x)\right\Vert _{V^{\prime}}\leq\frac{1}{r}\langle
\nabla\Phi(x),x-x^{\ast}\rangle_{V^{\prime},V}. \label{Q}%
\end{equation}
Let us now define the function $p(t)=\frac{1}{2}\left\vert u(t)-x^{\ast
}\right\vert ^{2}.$ We already know that $p$ satisfies the differential
inequality%
\[
p^{\prime\prime}(t)+\gamma(t)p^{\prime}(t)\leq\left\vert u^{\prime
}(t)\right\vert ^{2}-\langle\nabla\Phi(u(t)),u(t)-x^{\ast}\rangle_{V^{\prime
},V}+\langle g(t),u(t)-x^{\ast}\rangle.
\]
Hence by using (\ref{Q}), we obtain%
\begin{equation}
r\left\Vert \nabla\Phi(u(t))\right\Vert _{V^{\prime}}\leq-p^{\prime\prime
}(t)-\gamma(t)p^{\prime}(t)+\sigma(t), \label{Z}%
\end{equation}
where $\sigma(t)=\left\vert u^{\prime}(t)\right\vert ^{2}+\left\vert
g(t)\right\vert \sup_{t\geq0}\left\vert u(t)-x^{\ast}\right\vert .$

\noindent Recalling that in view Proposition \ref{pr1}, $\int_{0}^{+\infty
}\lambda_{\alpha}(t)\sigma(t)dt<\infty$ where $\lambda_{\alpha}%
(t)=(1+t)^{\alpha}.$ Hence, by multiplying (\ref{Z}) by $\lambda_{\alpha}(t)$
and integrating between $t_{0}$ and $\tau\geq t_{0},$ we get after
integrations by parts and simplification%
\begin{align*}
r\int_{t_{0}}^{\tau}\lambda_{\alpha}(t)\left\Vert \nabla\Phi(u(t))\right\Vert
_{V^{\prime}}dt  &  \leq C-\lambda_{\alpha}(\tau)p^{\prime}(\tau
)+\lambda_{\alpha}^{\prime}(\tau)p(\tau)\\
&  -\underset{\geq0}{\underbrace{(\lambda_{\alpha}\gamma)}}(\tau)p(\tau
)+\int_{t_{0}}^{\tau}[\underset{\leq0}{\underbrace{(\lambda_{\alpha}%
\gamma)^{\prime}}}-\lambda_{\alpha}^{\prime\prime}](t)p(t)dt
\end{align*}
where $C$ is a constant independent of $\tau.$ Since $\alpha<1$ and $u\in
L^{\infty}(\mathbb{R}^{+},H),$ the integral $\int_{t_{0}}^{+\infty}\left\vert
\lambda_{\alpha}^{\prime\prime}(t)\right\vert p(t)dt$ and the supremum
$\sup_{\tau\geq t_{0}}\lambda_{\alpha}^{\prime}(\tau)p(\tau)$ are finite.
Moreover, from Proposition \ref{pr1}, $\left\vert u^{\prime}(\tau)\right\vert
=\circ(\tau^{-\alpha})$ as $\tau\rightarrow+\infty,$ then
\[
\sup_{\tau\geq t_{0}}\lambda_{\alpha}(\tau)\left\vert p^{\prime}%
(\tau)\right\vert \leq\sup_{\tau\geq t_{0}}\lambda_{\alpha}(\tau)\left\vert
u^{\prime}(\tau)\right\vert \left\vert u(\tau)-x^{\ast}\right\vert <\infty.
\]
Therefore, we conclude that
\begin{equation}
\int_{t_{0}}^{+\infty}\lambda_{\alpha}(t)\left\Vert \nabla\Phi
(u(t))\right\Vert _{V^{\prime}}dt<+\infty. \label{T}%
\end{equation}
From Eq. (E$_{\alpha}$), we have
\[
u^{\prime\prime}(t)+\gamma(t)u^{\prime}(t)=g(t)-\nabla\Phi(u(t))
\]
Hence, by integrating this equation we get%
\begin{equation}
u^{\prime}(t)=e^{-\Gamma(t,\tau_{0})}u^{\prime}(\tau_{0})+\int_{\tau_{0}}%
^{t}e^{-\Gamma(t,s)}[g(s)-\nabla\Phi(u(s))]ds, \label{W}%
\end{equation}
for almost every $t\geq\tau_{0}$ where $\tau_{0}$ is the real defined by Lemma
\ref{le1}. Up to replace $\tau_{0}$ by $\tau_{0}^{\prime}>\tau_{0},$ we can
assume that $u^{\prime}(\tau_{0})\in H.$ Thus by applying Lemma \ref{le1} and
Fubini's theorem to the equality (\ref{W}), we obtain%
\begin{align*}
\int_{\tau_{0}}^{+\infty}\left\Vert u^{\prime}(t)\right\Vert _{V^{\prime}}dt
&  \leq\frac{2}{K}(1+\tau_{0})^{\alpha}\left\Vert u^{\prime}(\tau
_{0})\right\Vert _{V^{\prime}}+\frac{2}{K}\int_{\tau_{0}}^{+\infty
}(1+s)^{\alpha}\left\Vert g(s)\right\Vert _{V^{\prime}}\\
&  +\frac{2}{K}\int_{\tau_{0}}^{+\infty}(1+s)^{\alpha}\left\Vert \nabla
\Phi(u(s))\right\Vert _{V^{\prime}}ds.
\end{align*}
Hence $\int_{\tau_{0}}^{+\infty}\left\Vert u^{\prime}(t)\right\Vert
_{V^{\prime}}dt<+\infty$ thanks to the continuous injection $H\hookrightarrow
V^{\prime},$ the hypothesis on $g,$ and the estimate (\ref{T}). Thus we deduce
that $u(t)$ converges strongly in $V^{\prime}$ as $t\rightarrow+\infty$ to
some $u_{\infty}.$ Recalling that, in view of Theorem \ref{th1}, $u\in
L^{\infty}(\mathbb{R}^{+},V)$ and applying Lemma \ref{Chill}, we infer
that $u(t)\rightarrow u_{\infty}$ strongly in $H,$ which in view of
\cite[Corollary 3.6]{CF} implies that $u(t)$ converges strongly to $u_{\infty
}$ in $V.$ Finally, Theorem \ref{th1} ensures that $u_{\infty}\in\arg\min
\Phi.$ The proof is completed.
\end{proof}


\begin{thebibliography}{99}                                                                                               %


\bibitem {Al}F. Alvarez, On the minimizing properties of a second order
dissipative system in Hilbert spaces, SIAM J. Cont. Optim. 38 (4)(2000), 1102-1119.
\bibitem {ACPR}H. Attouch, Z. Chbani, J. Peypouquet, and P. Redont, Fast
convergence of an inertial dynamics and algorithms with asymptotic vanishing viscosity, Math. Program. 168 (1) (2018), 123-175.
\bibitem {AGR}H. Attouch, X. Goudou, and P. Redont, The heavy ball with
friction method, I. The continuous dynamical system, Commun. Contemp. Math.
02, 1 (2000), 1-34.

\bibitem {Br}R. Bruck, Asymptotic convergence of nonlinear contraction
semigroups in Hilbert space, J. Funct. Anal. \textbf{18} (1975), 15-26.

\bibitem {CF}A. Cabot and P. Frankel, Asymptotics for some semilinear
hyperbolic equations with non-autonomous damping, J. Differential Equations
252 (2012), 294-322.

\bibitem {HJ}A. Haraux, M.A. Jendoubi, On a second order dissipative ODE in
Hilbert space with an integrable source term, Acta Mathematica Scientia
32B(1)(2012), 155-163.

\bibitem {JM}M.A Jendoubi and R. May, Asymptotics for a second-order
differential equation with non-autonomous damping and an integrable source
term, Appl. Anal. 94(2)(2015), 435-443.

\bibitem {M}R. May, Long time behavior for a semilinear hyperbolic equation
with asymptotically vanishing damping term and convex potential, J. Math.
Anal. Appl. 430 (2015), 410-416.

\bibitem {Op}Z. Opial, Weak convergence of the sequence of successive
aproximation for nonexpansive mapping, Bull. Amer. Math. Soc. 73 (1967), 591-597.
\end{thebibliography}
\end{document}